\DeclareMathOperator{\dv}{div}
\newcommand{\de}{\delta}
\newcommand{\ep}{\varepsilon}
\newcommand{\al}{\alpha}
\newcommand{\Om}{\Omega}
\newcommand{\RR}{\mathbb{R}}
\newcommand{\itl}[1][\Om]{\int_{#1}}
\def\aa{\mathcal{A}}
\newcommand{\V}{W_0^{1,p}(\Om)}
\newcommand{\axgrad}[1]{\mathcal{A}(x,\ifblank{#1}{\nabla u\:}{#1})}
\newcommand{\dx}[1][x]{\, d#1} 
\newcommand{\trm}[1]{\quad \textrm{#1}\quad }
\newcommand{\wpo}[1][\Om]{W_0^{1,p}(#1)}
\newcommand{\abs}[1]{\lvert #1 \rvert}
\newcommand{\expt}[1][\abs{T_j (u_n) }]{e^{\de #1}}
\newcommand{\exptk}[1][\abs{T_j (u_k) }]{e^{\de #1}}
\def\dv{\mathop{\rm div}}
\def\bea{\begin{equation}\begin{aligned}}
\def\ena{\end{aligned}\end{equation}}
\def\beas{\begin{equation*}\begin{aligned}}
\def\enas{\end{aligned}\end{equation*}}
\def\nc{\newcommand}
\nc\m[1]{\left| #1\right|}
\nc\norm[1]{\left\|#1\right\|}
\newtheorem{theorem}{Theorem}[section]
\newtheorem{lemma}[theorem]{Lemma}
\newtheorem{proposition}[theorem]{Proposition}
\newtheorem{definition}[theorem]{Definition}
\newtheorem{hypT}[theorem]{Hypothesis}
\newtheorem{assmT}[theorem]{Assumption}
\newtheorem{remark}[theorem]{Remark}        
\numberwithin{equation}{section}
\begin{document}
\title[Nonlinear equations with natural growth in the gradient]{Nonlinear equations with gradient natural growth  and 
distributional data, with applications to a  Schr\"odinger type equation}

\author[Karthik Adimurthi]
{Karthik Adimurthi$^{1}$}
\address{${}^1$ Department of Mathematical Sciences, 
Seoul National University, GwanAkRo 1, Gwanak-Gu, 
Seoul 08826, 
South Korea.}
\email{kadimurthi@snu.ac.kr \and karthikaditi@gmail.com}

\author[Nguyen Cong Phuc]
{Nguyen Cong Phuc$^{2}$}
\address{${}^2$ Department of Mathematics,
Louisiana State University,
303 Lockett Hall, Baton Rouge, LA 70803, USA.}
\email{pcnguyen@math.lsu.edu}

\thanks{$^{1}$ Supported in part by National Research Foundation of Korea grant funded by the Korean government (MEST) (NRF-2015R1A2A1A15053024)}
\thanks{$^{2}$ Supported in part by Simons Foundation, award number 426071}

\begin{abstract} We obtain necessary and sufficient conditions with sharp constants on the distribution $\sigma$
for the existence of a globally finite energy solution to the quasilinear equation with   
a gradient source term  of natural growth  of the form 
$-\Delta_p u = |\nabla u|^p + \sigma$ in a bounded open set $\Om\subset \RR^n$. Here $\Delta_p$, $p>1$, is the standard $p$-Laplacian operator defined by $\Delta_p u={\rm div}\, (|\nabla u|^{p-2}\nabla u)$. The class of solutions that we are interested in consists  of functions $u\in W^{1,p}_0(\Om)$ such that 
$e^{{\mu} u}\in W^{1,p}_0(\Om)$ for some ${\mu}>0$ and 
 the inequality 
\begin{equation*}
\int_{\Om} |\varphi|^p |\nabla u|^p dx  \leq A \int_\Om |\nabla \varphi|^p dx 
\end{equation*}
holds  for all  $\varphi\in C_c^\infty(\Omega)$ with some constant $A>0$.  This is a natural class of solutions at least when  the distribution $\sigma$ is nonnegative. The study of  $-\Delta_p u = |\nabla u|^p + \sigma$ is applied to show  the existence of  globally finite energy solutions to the quasilinear  equation of Schr\"odinger type $-\Delta_p v = \sigma\, v^{p-1}$, $v\geq 0$ in $\Om$, and $v=1$ on $\partial\Om$, via the exponential transformation $u\mapsto v=e^{\frac{u}{p-1}}$.

\end{abstract}
	
\maketitle

\section{Introduction}
The main goal of this paper is to address the solvability of  quasilinear elliptic 
equations with gradient nonlinearity of natural growth  of the form
\begin{equation}
 \label{basic_pde}
\left\{ \begin{array}{ll}
-\Delta_p u = |\nabla u|^p + \sigma & \trm{in} \Omega, \\
u = 0 & \trm{on} \partial \Omega, 
\end{array}
\right.
\end{equation}
{in} a bounded open set $\Omega \subset \RR^n$. Here  $\Delta_p u:= \dv (|\nabla u|^{p-2} \nabla u)$, $p>1$, is the $p$-Laplacian  and the datum $\sigma$ is a distribution in $\Om$.
More generally, we also consider the equation
\begin{equation}
 \label{basic_pde2}
\left\{ \begin{array}{ll}
-\dv \aa(x, u, \nabla u) = \mathcal{B}(x, u, \nabla u) + \sigma & \trm{in} \Omega, \\
u = 0 & \trm{on} \partial \Omega, 
\end{array}
\right.
\end{equation}
where the principal operator $\dv \aa(x, u, \nabla u)$ is a Leray-Lions operator defined on $W_0^{1,p}(\Om)$ and $|\mathcal{B}(x, u, \nabla u)| \lesssim |\nabla u|^p$.

The precise  assumptions on  the nonlinearities $\aa$, $B$ and the the precise definition of  solutions to   \eqref{basic_pde2} will be given in Section \ref{GenStru}.  Here we emphasize that  in this paper  we are interested only in 
{\it finite energy solutions $u$} with zero boundary condition in the sense that $u\in\wpo$. The energy space $\wpo$ is defined as the completion of $C_c^\infty(\Om)$ under the 
semi-norm $\norm{\nabla (\cdot)}_{L^{p}(\Om)}$. 
  
As an application of the study of \eqref{basic_pde}, we  also obtain existence of  finite energy solution to the quasilinear Schr\"odinger type equation
\begin{equation}\label{basic_pde-schr}
-\Delta_p v =  (p-1)^{1-p}\, \sigma\, v^{p-1}  \text{ in } \Omega, \qquad v \geq 0  \text{ in } \Omega, \qquad v = 1   \text{ on } \partial \Omega. 
\end{equation}

Equation \eqref{basic_pde} is a  prototype for quasilinear equations with natural growth in the gradient that has attracted a lot of attention in the past years. It can  be viewed as a quasilinear stationary version of a time-dependent viscous Hamilton-Jacobi equation, also known as the Kardar-Parisi-Zhang equation,  which appears  in the physical theory of surface growth  \cite{KPZ, KS}.

As far as existence is concerned, the nonlinearity $|\nabla u|^p$ in \eqref{basic_pde} is considered ``to have the bad sign'' and  by now it is well-known that in order for \eqref{basic_pde} to have a solution the datum $\sigma$ must be both {\it small and regular enough}. In particular, if $\sigma$ is a nonnegative distribution in $\Om$ (i.e., a nonnegative locally finite measure \textcolor{black}{in $\Om$}), then a necessary condition for the first equation in \eqref{basic_pde} to have a $W^{1,p}_{\rm loc}(\Om)$ solution is that (see \cite{HMV, JMV1, JMV2})
\begin{equation}\label{possig}
\int_{\Om} |\varphi|^p d\sigma  \leq \lambda \int_\Om |\nabla \varphi|^p dx \quad \text{for all } \varphi\in C_c^\infty(\Omega),
\end{equation}
with $\lambda=(p-1)^{p-1}$. Moreover, when $\sigma\geq 0$ the nonlinear term itself also obeys a similar Poincar\'e-Sobolev inequality
\begin{equation}\label{nablau-cond}
\int_{\Om} |\varphi|^p |\nabla u|^pdx  \leq A \int_\Om |\nabla \varphi|^p dx \quad \text{for all } \varphi\in C_c^\infty(\Omega),
\end{equation}
with $A=p^p$. 

Thus a  natural  space of solutions  associated to \eqref{basic_pde} is the space $\mathcal{S}$ of functions $u\in W^{1,p}_0(\Om)$ such that 
\eqref{nablau-cond} holds for some $A>0$. The main   
question  we wish to  address here is to find an optimal (largest) space $\mathcal{D}$ of `data' so that whenever $\sigma\in \mathcal{D}$ with sufficiently small norm $\norm{\sigma}_{\mathcal{D}}$  then \eqref{basic_pde} admits a  solution in $\mathcal{S}$.  In the case $\sigma\geq 0$ we can completely characterize the existence of finite energy solutions to  \eqref{basic_pde} in the following theorem. We remark again that in this case all $W^{1,p}_0(\Om)$ solutions automatically belong to ${\mathcal{S}}$ and \eqref{nablau-cond} holds with $A=p^p$.

\begin{theorem}\label{posmeas} Let $\sigma$ be a nonnegative locally finite measure in $\Om$. If  \eqref{basic_pde} has a solution in $u\in \wpo$  then 
 $\sigma\in (W^{1,p}_0(\Om))^*$ and \eqref{possig} holds with $\lambda=(p-1)^{p-1}$.  Conversely, if $\sigma\geq 0$, $\sigma\in (W^{1,p}_0(\Om))^*$, and  \eqref{possig} holds with $0<\lambda< (p-1)^{p-1}$ then \eqref{basic_pde} has a \emph{nonnegative} solution in $W^{1,p}_0(\Om)$ such that $e^{\frac{\delta u}{p-1}}-1\in W^{1,p}_0(\Om)$ for all $\delta\in [0, \delta_0)$ where $\delta_0=(p-1) \lambda^{\frac{-1}{p-1}}$.
\end{theorem}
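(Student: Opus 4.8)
The plan is to treat the two implications separately, starting with the (easier) necessity part and then devoting the bulk of the effort to the construction of the solution via an exponential substitution and an approximation scheme.

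For the necessity direction, suppose $u\in\wpo$ solves \eqref{basic_pde} with $\sigma\geq 0$. The first step is to observe that since $|\nabla u|^p\geq 0$, the measure $\sigma = -\Delta_p u - |\nabla u|^p$ is dominated (as a distribution tested against nonnegative $C_c^\infty$ functions) by $-\Delta_p u$, which lies in $(\wpo)^* = \VV$ because $u\in\wpo$; hence $\sigma\in(\wpo)^*$. To obtain \eqref{possig} with the sharp constant $\lambda=(p-1)^{p-1}$, I would use the substitution $w = e^{\frac{u}{p-1}}$, which (at least formally) turns the first equation of \eqref{basic_pde} into $-\Delta_p w \geq (p-1)^{1-p}\sigma\, w^{p-1}$, i.e.\ $w$ is a nonnegative supersolution of the Schr\"odinger-type operator with potential $(p-1)^{1-p}\sigma$. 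The existence of a positive supersolution to such an operator is classically equivalent (via the ground-state transform / the standard argument testing with $\varphi^p w^{1-p}$ and using convexity, cf.\ the references \cite{HMV, JMV1, JMV2}) to the Hardy-type inequality $\int |\varphi|^p\, d\left[(p-1)^{1-p}\sigma\right]\leq \int|\nabla\varphi|^p\,dx$, which is exactly \eqref{possig} with $\lambda=(p-1)^{p-1}$. The technical point to be careful about is justifying the substitution and the testing when $u$ is merely in $\wpo$; this is where the hypothesis $e^{\mu u}\in\wpo$ (automatic here, and in general part of the definition of $\mathcal{S}$) is used to make $w$ an admissible test object.

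For the sufficiency direction, assume $0<\lambda<(p-1)^{p-1}$ and fix $\delta\in[0,\delta_0)$ with $\delta_0=(p-1)\lambda^{-1/(p-1)}$; note $\delta_0>1$ precisely because of the strict inequality $\lambda<(p-1)^{p-1}$. The strategy is: (i) make the change of unknown $v = e^{\frac{u}{p-1}}$, formally reducing \eqref{basic_pde} to the linear-looking problem $-\Delta_p v = (p-1)^{1-p}\sigma\, v^{p-1}$ in $\Om$, $v-1\in\wpo$ (equation \eqref{basic_pde-schr}), and conversely recover $u = (p-1)\log v$; (ii) construct a solution $v$ of this Schr\"odinger-type problem by approximation. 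For (ii) I would truncate $\sigma$ to $\sigma_k = \min(\sigma, k)$ (or mollify/truncate appropriately so that $\sigma_k\in(\wpo)^*\cap L^\infty$), solve the approximate problems $-\Delta_p v_k = (p-1)^{1-p}\sigma_k\, v_k^{p-1}$, $v_k-1\in\wpo$, obtaining bounded positive solutions $v_k\geq 1$ by monotone iteration (sub/supersolution method), and then pass to the limit. The key a priori estimate is uniform control of $\|\nabla v_k\|_{L^p}$ and, more delicately, of the exponential integrability $\|e^{\delta u_k/(p-1)}-1\|_{\wpo}=\|v_k^{\delta}-1\|_{\wpo}$: testing the equation for $v_k$ with $v_k^{\,\theta}-1$ for a suitable exponent $\theta$ (related to $\delta$), using \eqref{possig} with constant $\lambda<(p-1)^{p-1}$ and the elementary inequality governing $\nabla(v^\theta)$, produces the bound provided $\theta<\delta_0$, i.e.\ exactly in the claimed range. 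This gives weak convergence $v_k\rightharpoonup v$ in $\wpo$, and one upgrades to a.e.\ convergence of gradients (Boccardo--Murat type argument, or the monotonicity of the scheme) to pass to the limit in the equation and to conclude $v^\delta-1\in\wpo$, equivalently $e^{\delta u/(p-1)}-1\in\wpo$.

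The main obstacle I expect is the sharp exponential-integrability estimate with the precise threshold $\delta_0=(p-1)\lambda^{-1/(p-1)}$: one must choose the test function for the approximate problems and balance the constants in the Poincar\'e inequality \eqref{possig}, the pointwise algebraic inequality for $|\nabla(v^\theta)|^p$ versus $v^{(\theta-1)p}|\nabla v|^p$, and the absorption of the resulting term, so that the borderline case $\theta\uparrow\delta_0$ corresponds exactly to $\lambda\uparrow(p-1)^{p-1}$. A secondary difficulty is the standard-but-nontrivial task of justifying the exponential change of variables and the stability of the scheme in the limit — specifically that the weak limit $v$ is genuinely a (renormalized/finite-energy) solution, that $v>0$ a.e.\ so that $u=(p-1)\log v\in\wpo$ is well defined, and that $u$ then satisfies \eqref{basic_pde} in the intended weak sense; here one leans on the lower bound $v_k\geq 1$ (hence $v\geq 1$, $u\geq 0$) coming from $\sigma\geq 0$, which also yields the asserted nonnegativity of the solution.
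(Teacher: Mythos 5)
Your necessity argument reaches the right conclusion but takes an unnecessary and partly unjustified detour. The paper disposes of this direction in one line: since $\sigma=-{\rm div}\,(|\nabla u|^{p-2}\nabla u)-|\nabla u|^p\leq-{\rm div}\,(|\nabla u|^{p-2}\nabla u)$ and $\nabla u\in L^p(\Om)$, H\"older's inequality gives $\sigma\in(W^{1,p}_0(\Om))^*$, while \eqref{possig} with $\lambda=(p-1)^{p-1}$ is the known necessary condition of \cite{HMV,JMV1,JMV2}, whose standard proof tests the equation directly with $|\varphi|^p$ and applies Young's inequality with the optimal constant --- no exponential substitution is needed. Your version routes through the supersolution $w=e^{u/(p-1)}$ and asserts that $e^{\mu u}\in W^{1,p}_0(\Om)$ is ``automatic here''; it is not. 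The theorem assumes only $u\in W^{1,p}_0(\Om)$, and exponential integrability of $u$ is part of the \emph{conclusion} of the converse direction, not a free hypothesis in the direct one. This lapse is harmless only because the direct $|\varphi|^p$--Young argument bypasses it entirely.

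For sufficiency you take a genuinely different route from the paper: you would build the Schr\"odinger solution $v$ of \eqref{basic_pde-schr} first (truncation of $\sigma$, monotone iteration from the subsolution $1$, Moser-type estimates) and then set $u=(p-1)\log v$, whereas the paper deliberately goes the opposite way, obtaining Theorem \ref{posmeas} as an immediate corollary of Theorem \ref{weakzero}(ii) with $F=0$, $f=\sigma$ (nonnegativity of $u$ then follows from the comparison principle since $-\Delta_p u\geq 0$), that theorem being proved by attacking the Riccati equation directly with a Ferone--Murat approximation scheme. Your route is essentially that of \cite{HBV} (and \cite{ADP} for $p=2$) and is viable for $\sigma\geq 0$: the barrier $v\geq 1$ makes both the monotone iteration and the inverse logarithmic transform work, and your threshold computation does reproduce $\delta_0=(p-1)\lambda^{-1/(p-1)}$. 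What it does not buy is the rest of the paper: the monotone scheme and the bound $v\geq1$ collapse for sign-changing $\sigma$, while the paper's direct approach covers $\sigma={\rm div}\,F+f$ and general Leray--Lions structure with the larger constant $(p-1)^{p-1}$. If you pursue your route, two sketched points need real work: (i) $\sigma(\Om)$ may be infinite (e.g.\ $\sigma=\varepsilon\,{\rm dist}(x,\partial\Om)^{-1}$), so ``$\min(\sigma,k)$'' and any estimate that splits off $\int_\Om 1\,d\sigma$ must be replaced by pairings $\langle\sigma,\phi\rangle$ with $\phi\in W^{1,p}_0(\Om)$; and (ii) the test functions $v^{s}-1$ in the Moser iteration are not known to be admissible a priori and must be truncated before the absorption argument is run.
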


In the linear case, $p=2$, these necessary and sufficient conditions have been observed in \cite{FV}. See also \cite{ADP} {(for $p=2$)} and \cite{HBV} for 
{certain} related results that were obtained by  different {methods}. We remark that, under a mild restriction on the domain,  by Hardy's inequality  (see \cite{Anc, Lew}), Theorem \ref{posmeas} covers the case of unbounded measure such as $\sigma =\varepsilon\, {\rm dist}(x, \partial\Om)^{-1}$ for some $\varepsilon>0$. It is also worth mentioning that
in the case $p=2$  and $\sigma$ is a nonnegative locally finite measure, other sharp existence results for  \eqref{basic_pde} {were} obtained in
\cite{HMV} for $\Omega=\RR^n$ and recently in \cite{FV} for bounded domains $\Omega$ with $C^2$ boundary under a very weak  notion of solution and boundary conditions.

The first part of Theorem \ref{posmeas} follows from the known necessary condition \eqref{possig}, H\"older's inequality, and 
the assumption that $\nabla u\in L^p(\Omega)$, since we have
$$\sigma=-|\nabla u|^p - {\rm div}\, (|\nabla u|^{p-2}\nabla u)\leq - {\rm div}\, (|\nabla u|^{p-2}\nabla u).$$ 

 On the other hand, the second part is a consequence of Theorem 
\ref{weakzero}  below that treats  even sign changing distribution  datum $\sigma$. This in fact is the main result that will be obtained in this paper. 
 
\begin{theorem}\label{weakzero} {\rm (i)}  Suppose that \eqref{basic_pde} has a solution in $u\in W^{1,p}_0(\Om)$ such that 
\eqref{nablau-cond} holds for some $A>0$ then  it necessarily holds that  $\sigma= {\rm div}\, (F) - |F|^{\frac{p}{p-1}}$
for a vector field  $F\in L^{\frac{p}{p-1}}(\Om,\RR^n)$ such that 
\begin{equation}\label{Fpower-cond}
 \int_{\Om} |F|^{\frac{p}{p-1}} |\varphi|^p dx \leq A \int_\Om|\nabla \varphi|^p dx \quad \text{for all } \varphi\in C_c^\infty(\Om). 
\end{equation}
In particular, both $\sigma$ and $|F|^{\frac{p}{p-1}}$ belong to the dual space  $(W^{1,p}_0(\Om))^*$.

\noindent {\rm (ii)} Conversely, suppose that  $\sigma={\rm div}\, F + f$ where $F\in L^{\frac{p}{p-1}}(\Om,\RR^n)$ and $f$ is a locally finite signed measure in $\Om$ with $|f|\in (W^{1,p}_0(\Om))^*$ such that 
\begin{equation}\label{datasmallness}
p \int_{\Om} |F| |\varphi|^{p-1} |\nabla \varphi|dx +\int_{\Om} |\varphi|^p d|f| \leq \lambda  \int_\Om|\nabla \varphi|^p dx \qquad \forall   \varphi\in C_c^\infty(\Om),
\end{equation}
 for some   $\lambda\in (0, (p-1)^{p-1})$.
Then equation  \eqref{basic_pde} has a (possibly sign changing) solution  $u\in W^{1,p}_{0}(\Om)$ such that $e^{\frac{\delta u}{p-1}}-1\in W^{1,p}_0(\Om)$ for all $\delta\in [0, \delta_0)$ where $\delta_0=(p-1) \lambda^{\frac{-1}{p-1}}$. This solution  satisfies the Poincar\'e-Sobolev inequality \eqref{nablau-cond} for some $A=A(p)>0$. 
{Moreover, if $\lambda\in (0, (p-1)^{{\rm min}\{1, p-1\}})$, then   both $e^{\frac{u}{p-1}}-1$ and $e^{u}-1$ belong to  $W^{1,p}_0(\Om)$.}
\end{theorem}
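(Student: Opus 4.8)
\emph{Part (i)} follows directly from the equation. If $u\in\wpo$ solves \eqref{basic_pde}, then reading \eqref{basic_pde} against $C_c^\infty(\Om)$ gives, in the sense of distributions, $\sigma=-\dv(|\nabla u|^{p-2}\nabla u)-|\nabla u|^p=\dv F-|F|^{\frac{p}{p-1}}$ with $F:=-|\nabla u|^{p-2}\nabla u$; here $|F|^{\frac{p}{p-1}}=|\nabla u|^p\in L^1(\Om)$, so $F\in L^{\frac{p}{p-1}}(\Om,\RR^n)$, and \eqref{Fpower-cond} is literally the hypothesis \eqref{nablau-cond}. Moreover $\dv F\in(\wpo)^*$ because $F\in L^{\frac{p}{p-1}}$, while \eqref{Fpower-cond}, extended from $C_c^\infty(\Om)$ by density (using $\psi\in\wpo\Rightarrow|\psi|\in\wpo$), says exactly that the nonnegative measure $|F|^{\frac{p}{p-1}}dx$ has finite $W^{1,p}$-energy, hence lies in $(\wpo)^*$; therefore so does $\sigma$.

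\emph{Part (ii).} The organizing idea is the Hopf--Cole type substitution $v=e^{\frac{u}{p-1}}$, under which \eqref{basic_pde} formally turns into the gradient-free equation $-\Delta_p v=(p-1)^{1-p}\sigma\,v^{p-1}$ (equivalently $w:=e^{\frac{u}{p-1}}-1\in\wpo$ would solve $-\Delta_p w=(p-1)^{1-p}(1+w)^{p-1}\sigma$). Rather than solving this directly, I would run an approximation scheme for \eqref{basic_pde}. First regularize the data: mollify to get $F_j\to F$ in $L^{\frac{p}{p-1}}(\Om,\RR^n)$ with $F_j$ smooth, and replace $f=f^+-f^-$ by $f_j\in L^\infty(\Om)\cap(\wpo)^*$ of compact support with $|f_j|$ dominated by a mollification of $|f|$, so that \eqref{datasmallness} persists for $(F_j,f_j)$ with a constant $\lambda'\in(\lambda,(p-1)^{p-1})$ that may be taken arbitrarily close to $\lambda$. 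For each $j$, solve the truncated problems $-\Delta_p u_{j,k}=T_k(|\nabla u_{j,k}|^p)+\dv F_j+f_j$, $u_{j,k}\in\wpo\cap L^\infty(\Om)$, by Schauder's fixed-point theorem (the $L^\infty$ bound for fixed $j,k$ coming from Stampacchia's method, the right-hand side being bounded there), and let $k\to\infty$ via the Boccardo--Murat almost-everywhere gradient-convergence argument for equations with natural growth, obtaining a bounded solution $u_j$ of $-\Delta_p u_j=|\nabla u_j|^p+\dv F_j+f_j$.

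The core of the argument is a family of \emph{$j$-uniform} a priori estimates for the $u_j$. Testing the equation against $\Psi(u_j)$, where $\Psi$ is odd, increasing, $\Psi(0)=0$, and, for $|t|$ large, of the form $\Psi(t)=\sgn(t)\,G(|t|)^p$ with $G(s)=a(e^{bs}-1)$ (with a harmless modification near the origin), and writing $\varphi_j:=G(|u_j|)\in\wpo\cap L^\infty$ for the companion function, one obtains
\[
\int_\Om\big(\Psi'(u_j)-\Psi(u_j)\big)|\nabla u_j|^p\,dx\;\le\;p\int_\Om|F_j|\,\varphi_j^{p-1}|\nabla\varphi_j|\,dx+\int_\Om\varphi_j^{p}\,d|f_j|\;\le\;\lambda'\int_\Om|\nabla\varphi_j|^p\,dx,
\]
the last step being \eqref{datasmallness} for the approximating data applied to $\varphi=\varphi_j$. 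Since $|\nabla\varphi_j|^p=(ab)^pe^{pb|u_j|}|\nabla u_j|^p$ while $\Psi'(t)-\Psi(t)\sim a^p(pb-1)e^{pb|t|}$ as $|t|\to\infty$, this inequality, after reabsorbing the contribution of the region $\{|u_j|\le M\}$ and noting that on $\{u_j<0\}$ the natural-growth term has the favorable sign, becomes an honest absorption — yielding $\|\nabla u_j\|_{L^p(\Om)}\le C$ and $\int_\Om e^{pb|u_j|}|\nabla u_j|^p\,dx\le C$ uniformly in $j$ — precisely when $\lambda'<p^p(pb-1)/(pb)^p$; and $\max_{s>1}\frac{p^p(s-1)}{s^p}=(p-1)^{p-1}$, attained at $s=\frac{p}{p-1}$, so this is achievable for a suitable $b$ exactly under $\lambda'<(p-1)^{p-1}$, and, varying $b$ and then letting $\lambda'\downarrow\lambda$, one covers every exponent $\mu<p\,\lambda^{-1/(p-1)}$. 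It follows that, uniformly in $j$, $\int_\Om e^{\delta|u_j|}(1+|\nabla u_j|^p)\,dx\le C_\delta$ for each $\delta<\delta_0=(p-1)\lambda^{-1/(p-1)}$, hence $\|e^{\frac{\delta u_j}{p-1}}-1\|_{\wpo}\le C_\delta$; and, testing against $\zeta^p$ times a bounded increasing function of $u_j$ (with $\zeta\in C_c^\infty(\Om)$) and again absorbing the $F_j$- and $f_j$-terms via \eqref{datasmallness}, one gets \eqref{nablau-cond} with a constant $A$ depending only on $p$. (The functions $\Psi(u_j)$, $\varphi_j$ are taken quasi-continuous in order to pair with $f_j$.)

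It remains to pass to the limit $j\to\infty$. The uniform bounds give $u_j\rightharpoonup u$ in $\wpo$ along a subsequence, and the uniform exponential integrability upgrades this to $u_j\to u$ a.e.\ and in every $L^q(\Om)$. \textbf{The main obstacle} is the almost-everywhere convergence $\nabla u_j\to\nabla u$, needed to pass to the limit in the natural-growth term $|\nabla u_j|^p$; this is handled in the standard way for such equations by testing $u_j$'s equation with $e^{\gamma|u_j|}T_\varepsilon(u_j-u)$ — the exponential weight being forced by the quadratic-type growth — which gives $\nabla u_j\to\nabla u$ a.e.\ and, via the uniform bound on $e^{\delta|u_j|}|\nabla u_j|^p$ and Vitali's theorem, $|\nabla u_j|^p\to|\nabla u|^p$ in $L^1(\Om)$. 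Together with $F_j\to F$ in $L^{\frac{p}{p-1}}$ and the weak-$*$ convergence of $f_j$ (tested against $\wpo\cap L^\infty$ functions, using capacity theory), this shows that $u$ solves \eqref{basic_pde}, while lower semicontinuity and Fatou transfer all the uniform bounds to $u$: $e^{\frac{\delta u}{p-1}}-1\in\wpo$ for $\delta<\delta_0$, and \eqref{nablau-cond} with $A=A(p)$. Finally, the refined statement: $e^{\frac{u}{p-1}}-1\in\wpo$ already holds since $\delta=1\in[0,\delta_0)$ whenever $\lambda<(p-1)^{p-1}$; and $e^u-1\in\wpo$, which amounts to $\int_\Om e^{pu}|\nabla u|^p\,dx<\infty$, follows for $\lambda<(p-1)^{\min\{1,p-1\}}$ from the exponential estimate above — directly when $p\ge2$ (the relevant threshold being $p^p(p-1)/p^p=p-1=(p-1)^{\min\{1,p-1\}}$), and when $1<p<2$ because there $\lambda<(p-1)^{p-1}<1$ puts $\delta=p-1$ into $[0,\delta_0)$; the exponent bookkeeping here is delicate but routine.
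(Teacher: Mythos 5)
Your part (i) is correct and is the paper's argument verbatim (take $F=-|\nabla u|^{p-2}\nabla u$, so that \eqref{Fpower-cond} is literally \eqref{nablau-cond}). For part (ii) your architecture — approximate, prove uniform exponential a priori bounds by testing with an exponential function of the solution, optimize the exponent to reach the sharp threshold $(p-1)^{p-1}$, then pass to the limit via a.e.\ convergence of gradients — is essentially the paper's (which runs it through Theorem \ref{MainExistence} with the truncation $\mathcal{H}_k=\mathcal{B}/(1+\tfrac1k|\mathcal{B}|)$ and an added absorption term $\varepsilon|u|^{p-2}u$). But there are two concrete gaps.

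First, the regularization of the data. You mollify $F$ and $f$ and assert that \eqref{datasmallness} persists for $(F_j,f_j)$ with a constant $\lambda'$ arbitrarily close to $\lambda$. This is not justified: writing $\int_\Om|\varphi|^p(|f|*\rho_j)\,dx=\int\rho_j(z)\bigl(\int|\varphi(\cdot+z)|^p\,d|f|\bigr)dz$, you would need to apply \eqref{datasmallness} to the translates $\varphi(\cdot+z)$, which need not lie in $C_c^\infty(\Om)$ when $\varphi$ is supported near $\partial\Om$; a form bound relative to $\int_\Om|\nabla\varphi|^p\,dx$ on a bounded domain is not stable under mollification. The paper avoids this entirely by never approximating $\sigma$: only the nonlinearity is truncated, and the pairing $\int v\,df$ for $v\in\wpo$ is given meaning through quasicontinuous representatives (Lemma \ref{dis-mea}).

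Second, the absorption in your key estimate fails near the origin. With $\Psi(t)=\sgn(t)G(|t|)^p$, $G(s)=a(e^{bs}-1)$, one has $\Psi'(t)=pG(|t|)^{p-1}G'(|t|)\to0$ while $G'(|t|)^p\to(ab)^p>0$ as $t\to0$, so the pointwise inequality $\Psi'-|\Psi|\geq(\lambda'+\epsilon)(G')^p$ needed to absorb $\lambda'\int|\nabla\varphi_j|^p$ fails on $\{|u_j|\leq M\}$; and ``reabsorbing'' that region presupposes a uniform bound on $\int_{\{|u_j|\leq M\}}|\nabla u_j|^p\,dx$, which is not available before the estimate is proved (testing with $T_M(u_j)$ does not give it, since the natural-growth term contributes $M\int_\Om|\nabla u_j|^p\,dx$). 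The cure is a test function that is linear near $0$ and $\sim e^{p\mu|t|}$ at infinity: the paper uses $\sgn(u)\frac{1}{\mu}(e^{\mu|u_s|}-1)e^{(p-1)\mu|u_s|}$, for which the coercive term is exactly $\alpha_0\int|\nabla w_s|^p$ with no origin remainder, the price being the inequality $(1+\mu|w_s|)^{p-1}\leq(1+\tilde{\varepsilon})\mu^{p-1}|w_s|^{p-1}+C(\tilde{\varepsilon},p)$ in the data term, whose lower-order contribution is removed by Young's inequality. Your threshold computation is right — $\lambda<(pb-1)/b^p$, with supremum $(p-1)^{p-1}$ at $b=1/(p-1)$, is exactly the paper's condition \eqref{lambdacond2} — but the parenthetical claim that this ``covers every exponent $\mu<p\,\lambda^{-1/(p-1)}$'' is an overstatement (for $p\geq2$ the condition already fails at $b=p\lambda^{-1/(p-1)}$); fortunately only $\mu<\lambda^{-1/(p-1)}$, together with $\mu=1$ under $\lambda<(p-1)^{\min\{1,p-1\}}$, is needed, and both are covered.
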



Several remarks regarding Theorem \ref{weakzero} are now in order.
\begin{remark} By approximation and Fatou's lemma, inequalities \eqref{Fpower-cond} and \eqref{datasmallness} actually hold for all $\varphi\in W^{1,p}_0(\Om)$. The integral 
$\int_{\Om} |\varphi|^p d|f|$ makes sense even for $\varphi\in W^{1,p}_0(\Om)$ since $|f|$ is continuous with respect to the capacity ${\rm cap}_p(\cdot,\Om)$ and $\varphi$ has a 
${\rm cap}_p$-quasicontinuous representative, whose values are defined  ${\rm cap}_p$-quasieverywhere in $\Om$.
  Here ${\rm cap}_p(\cdot,\Om)$ is the \textcolor{black}{variational} $p$-capacity associated to $\Om$ defined for each compact set $K\subset\Om$ by
$${\rm cap}_{p}(K, \Om):=\inf\left\{\int_{\Om} |\nabla \phi|^p dx:\phi\in C_c^\infty(\Om) {\rm ~and~} \phi\geq \chi_K  \right\}.$$
\end{remark}

\begin{remark} By H\"older's inequality we see that if $F$ satisfies \eqref{Fpower-cond} for some $A>0$ then 
\begin{equation*}
p \int_{\Om} |F| |\varphi|^{p-1} |\nabla \varphi|dx  \leq p A^{\frac{p-1}{p}}  \int_\Om|\nabla \varphi|^p dx \qquad \forall   \varphi\in C_c^\infty(\Om).
\end{equation*}
Thus by Theorem \ref{weakzero}(ii) if $F\in L^{\frac{p}{p-1}}(\Om,\RR^n)$ satisfies \eqref{Fpower-cond} for some $0<A< (p-1)^p p^{-\frac{p}{p-1}}$ then the equation 
$-\Delta_p u =|\nabla u|^p +{\rm div}\, F$ \textcolor{black}{has a solution in  $W^{1,p}_0(\Om)$}.
\end{remark}

\begin{remark} Let $\mu$ be a nonnegative locally finite measure in $\Om$.  It is well-known that the inequality 
\begin{equation*}
\int_{\Om} |\varphi|^p d\mu \leq A_1 \int_\Om|\nabla \varphi|^p dx \qquad \forall   \varphi\in C_c^\infty(\Om)
\end{equation*}
is equivalent to the condition
\begin{equation}\label{Cap-A2}
\mu(K)\leq A_2 \, {\rm cap}_{p}(K, \Om)
\end{equation}
for all compact sets $K\subset\Om$ (see \cite[Chapter 2]{Maz}). 

Thus in \textcolor{black}{$(ii)$ of Theorem   \ref{weakzero}} , condition \eqref{datasmallness} can be replaced by \eqref{Cap-A2}
with $\mu=|F|^{\frac{p}{p-1}}+|f|$ for a sufficiently small constant $A_2>0$.  

\textcolor{black}{Moreover, by $(ii)$ of Theorem   \ref{weakzero}}, if  $f$ is a locally finite signed measure in $\Om$ with $|f|\in (W^{1,p}_0(\Om))^*$ such that
\eqref{Cap-A2} holds with $d\mu=d|f|$, the we have a decomposition 
$$f={\rm div}\, F -g,$$ where \textcolor{black}{$F\in L^{\frac{p}{p-1}}(\Om,\RR^n)$} and $g\in L^1(\Om), \, g\geq0,$ such that \textcolor{black}{the} $L^1$ function  $\mu:=(|F|^{\frac{p}{p-1}}+g)$ also satisfies \eqref{Cap-A2}. See  \cite{BGO, FS} for a similar decomposition of measures that are  continuous w.r.t the $p$-capacity.
\end{remark}

\begin{remark} Let $L^{s,\infty}(\Om)$, $s\geq 1$, denote the weak $L^s$ space on $\Om$ with quasinorm
$$\norm{g}_{L^{s,\infty}(\Om)}:= \sup_{t>0}t |\{ x\in\Om: |g(x)|>t\}|^{1/s}.$$
\textcolor{black}{For $g\in L^{\frac{n}{p}, \infty}(\Om)$ with $1<p<n$,} it is known that \textcolor{black}{(see, e.g., \cite[Eqn. (2.6)]{FM3})}
$$\int_\Om |\varphi|^p g dx \leq S_{n,p} \textcolor{black}{\norm{g}_{L^{\frac{n}{p},\infty}(\Om)}} \int_\Om |\nabla \varphi|^p dx \qquad \forall   \varphi\in C_c^\infty(\Om),$$ 
where the constant $S_{n,p}$ is given by
$$S_{n,p}= \left[ \frac{p}{\sqrt{\pi}(n-p)}  \right]^p  \Gamma(1+n/2)^{p/n}.$$

This shows that in Theorem   \ref{weakzero}(ii),  condition \eqref{datasmallness} can be replaced by 
\textcolor{black}{$|F|^{\frac{p}{p-1}}+|f|\in L^{\frac{n}{p},\infty}(\Om)$} with a sufficiently small norm. Existence results under this weak norm 
condition have been obtained in \cite{FM3}. See also the earlier works \cite{FM1, FM2} where the strong norm condition involving \textcolor{black}{$L^{\frac{n}{p}}(\Om)$} was used instead. 
More general existence results in which $|F|^{\frac{p}{p-1}}+|f|$ is assumed to be small in the norm of certain Morrey spaces can  be found in the recent paper \cite{MP}. 
Those Morrey space conditions are  also stronger than condition  \eqref{datasmallness} as they fall into the realm of Fefferman-Phong type conditions (see, e.g.,  \cite{DPT, ChWW, Fef, P, SW}).
\end{remark}

We now discuss the Schr\"odinger  type equation with distributional potential \eqref{basic_pde-schr}. This equation  is interesting in its own right and has a  strong connection to equation \eqref{basic_pde}
as being observed and exploited,  e.g., in \cite{ADP, HBV, JMV1, JMV2}.

\textcolor{black}{ By a solution to \eqref{basic_pde-schr}, we mean the following definition.}

\textcolor{black}{\begin{definition} Let $\sigma\in (W^{1,p}_0(\Om))^*$. A function $v$ defined in $\Om$ is a solution of  \eqref{basic_pde-schr} if  $v\geq 0$,   $v-1\in W^{1,p}_{0}(\Om)$,  $v^{p-1}\in W^{1,p}_{\rm loc}(\Om)$, and 
\begin{equation}\label{weak-schr}
\int_{\Omega} |\nabla v|^{p-2} \nabla v \cdot \nabla \varphi dx= (p-1)^{1-p} \langle \sigma, v^{p-1}\varphi\rangle \qquad \forall  \varphi\in C_c^\infty(\Om). 
\end{equation}
\end{definition}}

\textcolor{black}{Note that the right hand side of \eqref{weak-schr} makes sense since $v^{p-1}\varphi\in W^{1,p}_0(\Om)$ and 
$\sigma\in (W^{1,p}_0(\Om))^*$.}

Formally, by making the change of unknowns $v=e^{\frac{u}{p-1}}$, equation \eqref{basic_pde} is transformed into the
 Schr\"odinger  type equation \eqref{basic_pde-schr}.  Indeed, it is possible to show rigorously  that Theorem \ref{weakzero} implies the existence of finite energy solutions 
to  \eqref{basic_pde-schr}:

\begin{theorem}\label{Schro-type} Suppose that  $\sigma={\rm div}\, F + f$ where \textcolor{black}{$F\in L^{\frac{p}{p-1}}(\Om,\RR^n)$} and $f$ is a locally finite signed measure in $\Om$ with $|f|\in (W^{1,p}_0(\Om))^*$ such that 
\begin{equation*}
p \int_{\Om} |F| |\varphi|^{p-1} |\nabla \varphi|dx +\int_{\Om} |\varphi|^p d|f| \leq \lambda  \int_\Om|\nabla \varphi|^p dx 
\qquad \forall  \varphi\in C_c^\infty(\Om),
\end{equation*}
 for some   $\lambda\in (0, (p-1)^{\min\{1, p-1\}})$.
{Then equation  \eqref{basic_pde-schr} has a nonnegative solution $v$ such that both  \textcolor{black}{$v-1$ and 
$v^{p-1}-1$} belong to  \textcolor{black}{$W^{1,p}_{0}(\Om)$}. Moreover,  $v$   satisfies the following Poincar\'e-Sobolev inequality
\begin{equation}\label{WNforv}
 \int_{\Om} \left|\frac{\nabla v}{v}\right|^p |\varphi|^p dx \leq A \int_\Om|\nabla \varphi|^p dx \qquad \forall \varphi\in C_c^\infty(\Om), 
\end{equation}
with a constant $A=A(p)>0$.}
\end{theorem}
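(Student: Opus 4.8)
The plan is to deduce Theorem~\ref{Schro-type} from Theorem~\ref{weakzero}(ii) through the exponential substitution $v=e^{\frac{u}{p-1}}$. Since the hypothesis $\lambda\in(0,(p-1)^{\min\{1,p-1\}})$ implies in particular $\lambda\in(0,(p-1)^{p-1})$, Theorem~\ref{weakzero}(ii) applies and yields a (possibly sign changing) solution $u\in W^{1,p}_0(\Om)$ of \eqref{basic_pde}; moreover, since $\lambda<(p-1)^{\min\{1,p-1\}}$, the last assertion of that theorem gives both $e^{\frac{u}{p-1}}-1\in W^{1,p}_0(\Om)$ and $e^{u}-1\in W^{1,p}_0(\Om)$, while $u$ satisfies the Poincar\'e--Sobolev inequality \eqref{nablau-cond} with some $A=A(p)>0$.

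Next I would set $v:=e^{\frac{u}{p-1}}$. Then $v>0$ a.e.\ in $\Om$, $v-1\in W^{1,p}_0(\Om)$, and $v^{p-1}=e^{u}$, so that $v^{p-1}-1\in W^{1,p}_0(\Om)\subset W^{1,p}_{\rm loc}(\Om)$; hence $v$ has all the regularity required in the definition of a solution to \eqref{basic_pde-schr}. From $\nabla v=\tfrac{1}{p-1}\,v\,\nabla u$ one gets $\bigl|\tfrac{\nabla v}{v}\bigr|=\tfrac{1}{p-1}|\nabla u|$ pointwise, so \eqref{WNforv} is simply \eqref{nablau-cond} with $A$ replaced by $A/(p-1)^p$. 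To check the weak formulation \eqref{weak-schr}, I would fix $\varphi\in C_c^\infty(\Om)$ and use $\psi:=v^{p-1}\varphi=e^{u}\varphi$ as a test function in the weak form of \eqref{basic_pde}. Since $\nabla\psi=e^{u}\varphi\,\nabla u+e^{u}\nabla\varphi$, the contribution $\int_\Om e^{u}\varphi\,|\nabla u|^{p}\,dx$ arising on the left cancels exactly the natural-growth term $\int_\Om|\nabla u|^{p}\psi\,dx$ on the right, which leaves
\begin{equation*}
\int_\Om e^{u}\,|\nabla u|^{p-2}\nabla u\cdot\nabla\varphi\,dx=\langle\sigma,\,v^{p-1}\varphi\rangle.
\end{equation*}
Because $e^{u}|\nabla u|^{p-2}\nabla u=(p-1)^{p-1}|\nabla v|^{p-2}\nabla v$ (again from $\nabla v=\tfrac{1}{p-1}v\nabla u$ and $v>0$), dividing by $(p-1)^{p-1}$ produces exactly \eqref{weak-schr}.

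The step that I expect to be the main obstacle is justifying that $\psi=e^{u}\varphi$ is an admissible test function for \eqref{basic_pde}, since $e^{u}$ need not be bounded. I would first note that $\psi\in W^{1,p}_0(\Om)$, writing $\psi=(e^{u}-1)\varphi+\varphi$ and using $e^{u}-1\in W^{1,p}_0(\Om)$ and $\varphi\in C_c^\infty(\Om)$, and that all integrals above converge absolutely: $\int_\Om|\nabla u|^{p}e^{u}|\varphi|\,dx<\infty$ by H\"older's inequality together with \eqref{nablau-cond} (writing $|\nabla u|^{p}|\psi|=|\nabla u|^{p-1}\,(|\nabla u|\,|\psi|)$ and applying H\"older with exponents $p'$ and $p$), and $\langle\sigma,\psi\rangle$ is well defined since $\sigma\in(W^{1,p}_0(\Om))^{*}$. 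To obtain the identity rigorously I would approximate $\psi$ by the bounded functions $\psi_k:=\min\{e^{u},k\}\,\varphi\in W^{1,p}_0(\Om)\cap L^\infty(\Om)$, which are legitimate test functions for \eqref{basic_pde}; the identity holds with $\psi_k$ in place of $\psi$, and one then lets $k\to\infty$, using $\psi_k\to\psi$ in $W^{1,p}_0(\Om)$, the uniform bound $\int_\Om|\nabla u|^{p}|\psi_k|^{p}\,dx\le A\int_\Om|\nabla\psi_k|^{p}\,dx$ from \eqref{nablau-cond}, dominated convergence, and the continuity of $\sigma$ on $W^{1,p}_0(\Om)$. Once \eqref{weak-schr} has been established, the conclusions of Theorem~\ref{Schro-type} follow by assembling the facts recorded above.
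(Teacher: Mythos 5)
Your proposal is correct and follows essentially the same route as the paper: apply Theorem \ref{weakzero}(ii) to get $u$ with $e^{u}-1,\ e^{\frac{u}{p-1}}-1\in W^{1,p}_0(\Om)$, set $v=e^{\frac{u}{p-1}}$, test \eqref{basic_pde} with the truncations $\min\{e^{u},k\}\varphi$ and pass to the limit, then use $e^{u}|\nabla u|^{p-2}\nabla u=(p-1)^{p-1}|\nabla v|^{p-2}\nabla v$. The only cosmetic difference is that you justify the integrability of $|\nabla u|^{p}e^{u}|\varphi|$ via \eqref{nablau-cond} and H\"older, whereas the paper checks $e^{u}|\nabla u|^{p}\in L^{1}(\Om)$ directly by splitting $\{u\geq 0\}$ and $\{u<0\}$; both are valid.
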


\begin{remark}  If the factor $(p-1)^{1-p}$ on the right-hand side of \eqref{basic_pde-schr} is dropped  then the smallness condition on  
$\lambda$ becomes $\lambda\in (0, p^{\#})$, where $p^{\#}=(p-1)^{2-p}$ if $p> 2$ and $p^{\#}=1$ if $p\leq 2$ as in 
\cite{JMV2}. The sharpness of $p^{\#}$ (and \textcolor{black}{that} of $(p-1)^{\min\{1, p-1\}}$ for \eqref{basic_pde-schr}) was also justified in \cite{JMV2}.
\end{remark}

{One could also treat the Schr\"odinger type equation \eqref{basic_pde-schr} in a more general fashion, where the standard $p$-Laplacian is replaced by a quasilinear elliptic operator with merely measurable `coefficients'. See Remark \ref{mea-coeff} below and see also  \cite{JMV2}.}

We mention that the existence of  finite energy solutions to \eqref{basic_pde-schr}  in the case $\sigma \geq 0$ was obtained in \cite{HBV}   by {a  method that does not seem to work for  sign changing $\sigma$} (see also \cite{ADP} for  $p=2$).    On the other hand, the  work \cite{JMV2} (see also \cite{JMV1}) obtains a locally finite energy solution  $v\in W^{1,p}_{\rm loc}(\Om)$  to the first two equations in  \eqref{basic_pde-schr} \textcolor{black}{(\emph{without any boundary conditions})} only under the mild restriction
$$-\Lambda \int_\Om |\nabla \varphi|^p dx \leq \langle \sigma, |\varphi|^p \rangle \leq \lambda \int_\Om |\nabla \varphi|^p dx \quad \text{for all } \varphi\in C_c^\infty(\Om)$$
for some  $\lambda\in (0, (p-1)^{\min\{1, p-1\}})$ and $\Lambda\in (0,+\infty)$. Moreover, $v$ also satisfies  \eqref{WNforv} for some  $A>0$.  Then, also under the  restriction $\lambda\in (0, (p-1)^{\min\{1, p-1\}})$, by the logarithmic transformation
$u=(p-1)\log(v)$ it was  obtained in \cite{JMV2}, a solution $u\in W^{1,p}_{\rm loc}(\Om)$ to the first equation in \eqref{basic_pde} (but without any boundary condition) that also satisfies \eqref{nablau-cond} for some $A>0$.

\emph{In this paper, we follow an opposite route, i.e.,  we first treat equation \eqref{basic_pde} directly and then deduce existence for the Schr\"odinger type equation \eqref{basic_pde-schr} from it.} This way, we are able to treat equation \eqref{basic_pde} in its most general form, i.e.,  the nonlinear equation with general structure \eqref{basic_pde2}.   Moreover, for equation \eqref{basic_pde} we obtain larger upper bound  for $\lambda$ in the existence condition \eqref{datasmallness} $( {\rm i.e.,~} (p-1)^{p-1}$ versus $(p-1)^{\min\{ 1, p-1\}})$. Our approach to \eqref{basic_pde2} is a refinement of the approach of  V. Ferone and F. Murat in \cite{FM2, FM3}. The main difficulties to overcome here are  the generality nature of $\sigma$ and the sharpness of the smallness constants. In particular, in this scenario one does not gain any higher integrability on the nonlinear term  $\mathcal{B}(x,u, \nabla u)$, which makes it impossible to follow  a  compactness argument as in \cite{MP}.
Moreover, in order for us to apply the existence results of \eqref{basic_pde} to \eqref{basic_pde-schr} we need to find a solution $u$  of \eqref{basic_pde} with the additional property that both  $e^{\frac{u}{p-1}}-1$ and   $e^{u}-1$ belong to $W^{1,p}_0(\Om)$ as stated in Theorem \ref{weakzero}.

\section{Equations with general nonlinear structure}\label{GenStru}
As we have mentioned, existence results in the spirit of Theorem \ref{weakzero}(ii) also hold for  equations with a more general nonlinear structure 
\eqref{basic_pde2}. For that we need the following assumptions on the nonlinearities  $\mathcal{A}$ and $\mathcal{B}$:  

\noindent{\bf Assumption on $\mathcal{A}$.} The nonlinearity $\aa : \Om \times \RR\times \RR^n \rightarrow \RR^n$ is a  Carath\'edory  function, i.e., $\aa(x, s, \xi)$ is measurable in $x$ for every $(s,\xi)$ and continuous in $(s,\xi)$ for a.e. $x\in\Om$. For some $p>1$, it holds that
\begin{gather}
\label{monotone-strict} \langle \aa(x, s, \xi) - \aa(x, s, \eta), \xi - \eta \rangle >0,\\
\label{coercivity}  \langle \aa(x, s, \xi), \xi\rangle \geq \alpha_0 |\xi|^p,\\
\label{growth-p} |\aa(x, s, \xi)| \leq a_0|\xi|^{p-1} + a_1 |s|^{p-1}
\end{gather}
for every  $(\xi, \eta)\in \RR^n \times \RR^n$, $\xi\not=\eta$, and a.e. $x \in\Om$. Here $\alpha_0>0$, and  $a_0, a_1\geq 0$.

\noindent{\bf Assumption on $\mathcal{B}$.} The nonlinearity $\mathcal{B} : \Om \times \RR \times \RR^n \rightarrow \RR$ is a Carath\'edory  function which satisfies, for a.e. $x\in\Om$, every $s\in\RR$, and every $\xi\in\RR^n$,
\begin{equation}\label{Bcond}
|\mathcal{B}(x,s,\xi)|\leq b_0 |\xi|^p +b_1 |s|^m, \quad \mathcal{B}(x,s,\xi){\rm sign}(s) \leq \alpha_0 \gamma_0 |\xi|^p,
\end{equation}
 where $m>0$, and $b_0, b_1$, $\gamma_0\geq 0$.  Here $\alpha_0$ is as given in \eqref{coercivity}.

By a solution of  \eqref{basic_pde2} we mean  the following. 
\begin{definition} Under \eqref{monotone-strict}-\eqref{Bcond}, a function $u \in W^{1,p}_{0}(\Om)$ is a solution of  \eqref{basic_pde2} if   $\mathcal{B}(x, u, \nabla u)\in L^1_{\rm loc}(\Om)$ and 
\begin{equation*} 
 \itl \aa(x, u, \nabla u) \cdot \nabla \varphi \ dx = \itl \mathcal{B}(x, u, \nabla u) \varphi \ dx + \langle\sigma, \varphi\rangle
\end{equation*}
\textcolor{black}{holds} for all test functions $\varphi \in C_c^{\infty} (\Omega)$. 
\end{definition}

We remark that in the case $\mathcal{B}(x, u, \nabla u)\in L^1(\Om)$ and $\sigma\in (W^{1,p}_{0}(\Om))^{*}$,   we can take any function $\varphi\in W^{1,p}_{0}(\Om)\cap L^\infty(\Om)$ as a test function in the above definition. This  follows from a result of Br\'ezis and Browder \cite{BB} as we have $\mathcal{B}(x, u, \nabla u)\in(W^{1,p}_{0}(\Om))^{*} \cap  L^1(\Om)$. It can also be seen by  approximating $\varphi$ in $W^{1,p}_0(\Om)$ by a sequence $\varphi_j\in C_c^\infty(\Om)$ such that $|\varphi_j| \leq |\varphi|\leq M$ a.e. (using Theorem 9.3.1 in \cite{AH} and suitable convolutions). 

We mention that in the special case $|\mathcal{B}(x, u, \nabla u)| \in (W^{1,p}_{0}(\Om))^{*} \cap  L^1(\Om)$\textcolor{black}{, we} can even drop the condition $\varphi\in L^\infty(\Om)$. In fact, we have the following more general result. 

\begin{lemma}\label{dis-mea} Suppose that  $f$ is a locally finite signed measure in $\Om$ with $|f|\in (W^{1,p}_0(\Om))^*$. Then for any $\varphi\in W^{1,p}_0(\Om)$ we have
$$\langle f, \varphi\rangle= \int_{\Om} \widetilde{\varphi}\, d f,$$
where $\widetilde{\varphi}$ is any ${\rm cap}_p$-quasicontinuous representative of $\varphi$. 
\end{lemma}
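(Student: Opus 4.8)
The plan is to reduce the statement to the known fact that a nonnegative measure $\mu$ with $\mu\in(W^{1,p}_0(\Om))^*$ can be integrated against quasicontinuous representatives of Sobolev functions. Writing $f=f^+-f^-$ with $f^\pm$ nonnegative locally finite measures, the hypothesis $|f|=f^++f^-\in(W^{1,p}_0(\Om))^*$ forces both $f^+$ and $f^-$ to lie in $(W^{1,p}_0(\Om))^*$ as well, so it suffices to prove the identity for a \emph{nonnegative} measure $\mu$ with $\mu\in(W^{1,p}_0(\Om))^*$; the general case follows by subtraction, using that the quasicontinuous representative $\widetilde\varphi$ is the same for $f$, $f^+$, $f^-$, and $|f|$.

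For a nonnegative $\mu\in(W^{1,p}_0(\Om))^*$, I would first establish the identity for $\varphi\in C_c^\infty(\Om)$, where it is trivial since $\widetilde\varphi=\varphi$ is continuous, and then pass to the limit. Given $\varphi\in W^{1,p}_0(\Om)$, choose $\varphi_j\in C_c^\infty(\Om)$ with $\varphi_j\to\varphi$ in $W^{1,p}_0(\Om)$. The left-hand side converges to $\langle f,\varphi\rangle$ since $f\in(W^{1,p}_0(\Om))^*$ (note $\langle f,\psi\rangle=\langle f^+,\psi\rangle-\langle f^-,\psi\rangle$). For the right-hand side, the key tool is that convergence in $W^{1,p}_0(\Om)$ implies, after passing to a subsequence, convergence $\cpt$-quasieverywhere of the quasicontinuous representatives; more precisely one uses the standard capacitary estimate
\begin{equation*}
\cpt\!\left(\{x:|\widetilde{\varphi_j}(x)-\widetilde\varphi(x)|>t\},\Om\right)\leq \frac{C}{t^p}\,\|\varphi_j-\varphi\|_{W^{1,p}_0(\Om)}^p,
\end{equation*}
which lets one extract a subsequence converging quasiuniformly, hence $\mu$-a.e. (since $\mu\in(W^{1,p}_0(\Om))^*$ implies $\mu$ charges no set of zero $p$-capacity). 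Then a dominated-convergence argument — using a quasicontinuous majorant obtained by applying the same estimate to a rapidly converging subsequence and summing, so that $\sup_j|\widetilde{\varphi_j}|$ is dominated by a function in $L^1(\mu)$ — yields $\int_\Om\widetilde{\varphi_j}\,d\mu\to\int_\Om\widetilde\varphi\,d\mu$.

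The main obstacle is the justification of the dominated convergence step: one must produce, along a suitable subsequence, an $L^1(\mu)$ majorant for the sequence $\{\widetilde{\varphi_j}\}$. This is handled by the classical construction (see \cite{AH} or \cite{Maz}): pick a subsequence with $\|\varphi_{j_{k+1}}-\varphi_{j_k}\|_{W^{1,p}_0(\Om)}\leq 2^{-k}$, let $g=|\widetilde{\varphi_{j_1}}|+\sum_k|\widetilde{\varphi_{j_{k+1}}}-\widetilde{\varphi_{j_k}}|$, which is quasicontinuous and satisfies $\int_\Om g^p\,d\mu\lesssim\|g\|_{W^{1,p}_0(\Om)}^p<\infty$ by the hypothesis $\mu\in(W^{1,p}_0(\Om))^*$ together with the fact that this inequality is equivalent to the capacitary condition and extends to quasicontinuous Sobolev functions by the preceding Remark; since $g\in L^p(\mu)$ and $\mu$ is locally finite with $g$ dominating $|\widetilde{\varphi_{j_k}}|$ $\mu$-a.e., Hölder (or the trivial bound $|\widetilde{\varphi_{j_k}}|\leq g$ together with $g\in L^1(\mu)$ when $\varphi$ has compact support — in general one truncates or uses $g\in L^p(\mu)\subset L^1_{\rm loc}(\mu)$ and the finiteness of $\mu$ on the relevant sets) gives the required domination. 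Finally, since the limit $\int_\Om\widetilde\varphi\,d\mu$ does not depend on the approximating sequence and the left-hand side identifies it with $\langle f,\varphi\rangle$, the identity holds for the whole sequence, completing the proof.
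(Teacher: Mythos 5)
Your reduction to the nonnegative case via the Jordan decomposition is exactly what the paper does (the paper then simply cites Mikkonen, Lemma~2.5, for nonnegative measures). The problem is in your proof of the nonnegative case, specifically in the dominated convergence step. You justify $\int_\Om g^p\,d\mu\lesssim\norm{g}_{W^{1,p}_0(\Om)}^p$ ``by the hypothesis $\mu\in(W^{1,p}_0(\Om))^*$ together with the fact that this inequality is equivalent to the capacitary condition.'' But membership of a nonnegative measure in $(W^{1,p}_0(\Om))^*$ is \emph{not} equivalent to the trace inequality $\int_\Om|\psi|^p\,d\mu\leq C\int_\Om|\nabla\psi|^p\,dx$; the latter is equivalent to $\mu(K)\lesssim \cpt(K,\Om)$, whereas testing the dual pairing against a capacitary potential of $K$ only yields $\mu(K)\lesssim \cpt(K,\Om)^{1/p}$, a strictly weaker condition for sets of small capacity. (For $p=2$, $n>2$, the measure $|x|^{-s}\,dx$ with $2<s<\tfrac{n}{2}+1$ lies in $H^{-1}$ but is not form-bounded.) The lemma assumes only $|f|\in(W^{1,p}_0(\Om))^*$ — the stronger inequality \eqref{smalllambda} is an additional hypothesis elsewhere in the paper, not part of this lemma — so your $L^p(\mu)$ bound on the majorant $g$ is unavailable.

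Even granting $g\in L^p(\mu)$, the passage to an $L^1(\mu)$ majorant is not closed: $\mu$ is only locally finite and may well have $\mu(\Om)=\infty$ (the paper explicitly allows $\sigma=\varepsilon\,{\rm dist}(x,\partial\Om)^{-1}$), and $\widetilde\varphi$ need not have compact support, so ``truncating'' or invoking ``finiteness of $\mu$ on the relevant sets'' does not produce the required domination. The standard way around this (and, in essence, Mikkonen's argument) avoids dominated convergence altogether: reduce further to $\varphi\geq 0$, approximate by a sequence of nonnegative compactly supported Sobolev (or smooth) functions increasing to $\widetilde\varphi$ quasieverywhere and converging in $W^{1,p}_0(\Om)$ (Adams--Hedberg, Theorem~9.3.1, already cited in the paper), and use monotone convergence on the measure side against norm convergence on the dual-pairing side; this simultaneously shows $\widetilde\varphi\in L^1(\mu)$ and gives the identity. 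As written, your limit passage on the right-hand side would fail, so the proof has a genuine gap.
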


In the case $f$ is nonnegative, the proof of Lemma \ref{dis-mea} can be found in \cite[Lemma 2.5]{Mik}. The general case also follows from \textcolor{black}{that,} since $f=f^{+} + f^{-}$ and both $f^{+}$ and   $f^{-}$  belong to   $(W^{1,p}_0(\Om))^*$. In what follows, when dealing with pointwise behavior of functions in $W^{1,p}_0(\Om)$ we will implicitly use their ${\rm cap}_p$-quasicontinuous representatives. Lemma \ref{dis-mea} will be \textcolor{black}{used,} e.g., in \eqref{realize-sig} below.

Under the above assumptions on $\mathcal{A}$ and $\mathcal{B}$, we obtain the following existence result.

\begin{theorem}\label{MainExistence} Let $\sigma={\rm div}\, F + f$ where \textcolor{black}{$F\in L^{\frac{p}{p-1}}(\Om,\RR^n)$} and $f$ is a locally finite signed measure in $\Om$ with $|f|\in (W^{1,p}_0(\Om))^*$ such that 
\begin{equation}\label{smalllambda}
p \int_{\Om} |F| |\varphi|^{p-1} |\nabla \varphi|dx +\int_{\Om} |\varphi|^p d|f| \leq \lambda  \int_\Om|\nabla \varphi|^p dx  
\end{equation}
holds for all $\varphi\in C_c^\infty(\Om)$, with   $\lambda\in (0, \gamma_0^{1-p}\alpha_0(p-1)^{p-1}).$
  Then there exists a  solution $u\in W_0^{1,p}(\Om)$ to  the equation 
	\begin{equation}\label{basic_pde3}
-\dv \aa(x, u, \nabla u) = \mathcal{B}(x, u, \nabla u) + \sigma  \trm{in} \Omega,
\end{equation}
such that $e^{\frac{\delta|u|}{p-1}}-1\in W^{1,p}_0(\Om)$  for all $\de\in [\gamma_0, \de_0)$,  with \textcolor{black}{$\delta_0=(p-1) \left(\frac{\alpha_{0}}{\lambda}\right)^{\frac{1}{p-1}}$.}

{ Moreover,  for any $\delta_1 > \gamma_0$ such that   \eqref{smalllambda} holds with
\begin{equation}\label{lambdacond2}
\lambda<\left(\frac{p-1}{\delta_1}\right)^p \alpha_0 \left(\frac{\delta_1}{p-1} +\delta_1-\gamma_0\right),
\end{equation}
 we have  $e^{\frac{\delta_1|u|}{p-1}}-1 \in {\V}$.} 
\end{theorem}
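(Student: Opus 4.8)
\textbf{Proof proposal for Theorem \ref{MainExistence}.}

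The plan is to obtain the solution $u$ as a limit of solutions $u_n$ of suitably regularized problems, with the regularization chosen so that the nonlinear gradient term $\mathcal{B}(x,u_n,\nabla u_n)$ is truncated (say replaced by $\mathcal{B}_n(x,s,\xi)=\mathcal{B}(x,s,\xi)/(1+\frac1n|\mathcal{B}(x,s,\xi)|)$ or a truncation in the $s$-variable) and the datum $\sigma$ is approximated by $\sigma_n = \dv F_n + f_n$ with $F_n\in L^\infty$, $f_n\in L^\infty$, still satisfying \eqref{smalllambda} with the same $\lambda$ (this can be arranged by truncation, using that the left side of \eqref{smalllambda} only decreases under truncation of $|F|$ and $|f|$). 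For each $n$ the regularized problem has a finite energy solution $u_n\in W^{1,p}_0(\Om)$ by standard Leray--Lions theory (e.g.\ via Schauder fixed point or pseudomonotone operator theory), since the right-hand side is now bounded.

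The heart of the matter is the \emph{a priori exponential estimate}, uniform in $n$, which is the mechanism that both controls $\|u_n\|_{W^{1,p}_0}$ and yields the membership $e^{\delta|u_n|/(p-1)}-1\in W^{1,p}_0(\Om)$. The key step is to test the regularized equation against $\varphi = \operatorname{sign}(u_n)\,(e^{\delta|u_n|}-1)\,\psi^p$ for a cutoff (or simply $\psi\equiv 1$ since $u_n$ already has zero boundary values), exploiting the sign condition in \eqref{Bcond}: $\mathcal{B}(x,s,\xi)\operatorname{sign}(s)\le \alpha_0\gamma_0|\xi|^p$, together with coercivity \eqref{coercivity}. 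After using the chain rule on $e^{\delta|u_n|}$, one gets on the left a term $\alpha_0\delta\int |\nabla u_n|^p e^{\delta|u_n|}$ and on the right the bad term $\alpha_0\gamma_0\int|\nabla u_n|^p(e^{\delta|u_n|}-1)$ plus the contributions of $\sigma_n$. For the datum term one writes, with $w_n := e^{\delta|u_n|/p}-1 \in W^{1,p}_0(\Om)$, that $|\nabla w_n|^p = (\delta/p)^p|\nabla u_n|^p e^{\delta|u_n|}$ and feeds $\varphi=w_n$ (really $\operatorname{sign}(u_n)(\,\cdot\,)$ absorbed) into the hypothesis \eqref{smalllambda}; here the elementary inequality $p\,|F|\,|\varphi|^{p-1}|\nabla\varphi| + |\varphi|^p\,|f|$ controlling $|\langle \dv F + f, \cdot\rangle|$ after integration by parts is exactly what makes \eqref{smalllambda} the right hypothesis. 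Collecting terms, one arrives at an inequality of the shape
\begin{equation*}
\Big[\alpha_0\Big(\tfrac{\delta}{p-1}+\delta-\gamma_0\Big) - \lambda\Big(\tfrac{\delta}{p-1}\Big)^p\Big]\int_\Om |\nabla u_n|^p e^{\delta|u_n|}\,dx \;\le\; C(\lambda,\delta,p)\,,
\end{equation*}
and the bracket is positive precisely under the condition \eqref{lambdacond2} on $\delta=\delta_1$ (and, for the first assertion, for every $\delta\in[\gamma_0,\delta_0)$, since one checks that \eqref{lambdacond2} holds automatically when $\lambda<\gamma_0^{1-p}\alpha_0(p-1)^{p-1}$ and $\delta<\delta_0$). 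This gives a uniform bound on $\int|\nabla u_n|^p e^{\delta|u_n|}$, hence on $\|\nabla(e^{\delta|u_n|/p}-1)\|_{L^p}$ and on $\|u_n\|_{W^{1,p}_0}$, and simultaneously the Poincaré--Sobolev inequality \eqref{nablau-cond} with $A=A(p)$ by a parallel test-function computation (testing against $|\varphi|^p$-type functions against the equation for $u_n$).

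With the uniform bounds in hand, I would extract a subsequence with $u_n\rightharpoonup u$ in $W^{1,p}_0(\Om)$ and $u_n\to u$ a.e.; the exponential bound passes to the limit by Fatou, giving $e^{\delta|u|/(p-1)}-1\in W^{1,p}_0(\Om)$ for the stated range of $\delta$, and likewise \eqref{nablau-cond}. The remaining obstacle --- and this is the genuinely delicate point --- is \emph{strong} convergence of the gradients, $\nabla u_n\to\nabla u$ in $L^p_{\rm loc}$ (indeed in $L^p(\Om)$), which is needed to pass to the limit in both the principal term $\aa(x,u_n,\nabla u_n)$ and the nonlinear term $\mathcal{B}(x,u_n,\nabla u_n)$. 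This I would handle by the by-now standard Boccardo--Murat--type argument: test the difference of equations against a truncation of $u_n-u$ (e.g.\ $T_k(u_n-u)$ multiplied by a function of $|u_n|$ like $e^{-\delta|u_n|}$ to kill the natural-growth term via the sign condition and the already-established $L^p$-bound on $|\nabla u_n|^p e^{\delta|u_n|}$), use the strict monotonicity \eqref{monotone-strict} to deduce $\nabla u_n\to\nabla u$ in measure, then almost everywhere along a further subsequence, and finally upgrade to strong $L^p$ convergence using the equi-integrability furnished by the exponential estimate (Vitali). Once $\nabla u_n\to\nabla u$ a.e.\ and the sequences $\aa(x,u_n,\nabla u_n)$, $\mathcal{B}(x,u_n,\nabla u_n)$ are equi-integrable, a Vitali argument identifies the limits and shows $u$ solves \eqref{basic_pde3} in the required weak sense, with $\mathcal{B}(x,u,\nabla u)\in L^1(\Om)$; the identity $\langle \sigma,\varphi\rangle=\int \widetilde\varphi\,d\sigma$-type bookkeeping for the measure part uses Lemma \ref{dis-mea}. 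The main difficulty throughout is keeping all constants sharp: the competition between the coercivity/sign constants $\alpha_0,\gamma_0$ and the datum constant $\lambda$ must be tracked exactly through the test-function computation so that the threshold comes out as $\gamma_0^{1-p}\alpha_0(p-1)^{p-1}$ and \eqref{lambdacond2}, rather than some non-optimal bound.
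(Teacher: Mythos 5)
Your overall architecture (truncate $\mathcal{B}$ to $\mathcal{H}_n$, prove a sharp exponential a priori estimate by testing with $\operatorname{sign}(u)\,e^{c|u|}$-type functions, then recover the equation in the limit via a Bensoussan--Boccardo--Murat/Ferone--Murat strong gradient convergence argument with Browder's lemma and Vitali) is exactly the paper's, and your constant bookkeeping, once the slip $(\delta/(p-1))^p$ versus $(\delta/p)^p$ in your displayed bracket is repaired, does reproduce \eqref{lambdacond2} and the threshold $\gamma_0^{1-p}\alpha_0(p-1)^{p-1}$. The genuine gap is in the justification of the exponential test function for the approximate problems. A solution $u_n\in W^{1,p}_0(\Om)$ of the truncated equation is, a priori, only in $W^{1,p}_0(\Om)$, so $\operatorname{sign}(u_n)(e^{\delta|u_n|}-1)$ is not an admissible test function; one must use the two-sided truncations $T_s(u_n)$ inside the exponential, and then a tail term of the form $\int_{\{|u_n|>s\}}\mathcal{H}_n(x,u_n,\nabla u_n)(\dots)\,dx$ survives, bounded only by $n\,e^{cs}|\{|u_n|>s\}|$, which does \emph{not} tend to $0$ as $s\to\infty$ (the measure of the level set decays only polynomially). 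The paper resolves this by inserting the zeroth-order regularization $\varepsilon|u|^{p-2}u$ into the approximate equation \eqref{approx-ep}: its contribution $I_2\geq \varepsilon s^{p-1}\int e^{\delta s}\tfrac{e^{\mu s}-1}{\mu}\chi_{\{|u|>s\}}$ absorbs the tail as soon as $\varepsilon s^{p-1}\geq k$ (see \eqref{ep-absorb}), and then a second limit $\varepsilon\searrow 0$ is taken. Your proposal contains no such device. Your alternative—approximating $\sigma$ by bounded data—could in principle substitute for it, but only if you first prove $u_n\in L^\infty(\Om)$ (Stampacchia/De Giorgi) so that the exponential test function is legitimate; you never state or use this, and it is the whole point of the approximation. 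Moreover, for the measure part $f$ the claim that the approximation ``can be arranged by truncation'' preserving \eqref{smalllambda} with the same $\lambda$ is not justified: $f$ is a measure, not a function, and mollifying it does not obviously preserve the capacitary inequality on $\Om$ with the same constant. The paper avoids this entirely by never touching $\sigma$.

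Two smaller points. First, your derivation of the first assertion ($\delta\in[\gamma_0,\delta_0)$) from the \eqref{lambdacond2}-case is slightly circular as written: exploiting the negative term $(\gamma_0-\delta)\alpha_0\int|w|e^{\delta|u|}|\nabla u|^p$ forces one to replace $e^{\mu|u|}-1$ by $(1-\varepsilon)e^{\mu|u|}-C$, which leaves an error $C\int|\nabla u|^p$ on the right-hand side (this is why the paper's estimate \eqref{apri2} is not self-contained and must be preceded by \eqref{apri1}, which is proved by simply discarding $I_3+I_4\leq 0$ when $\delta\geq\gamma_0$). Your displayed inequality hides this error term. Second, the Poincar\'e--Sobolev inequality \eqref{nablau-cond} is not part of Theorem \ref{MainExistence}, and the paper explicitly declines to claim it under the general structure conditions (the test-function computation you invoke uses that $\mathcal{A}$ is exactly $|\xi|^{p-2}\xi$); this over-claim is harmless for the statement at hand but should be dropped.
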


{\begin{remark}  It is easy to check that, for $\delta_1>\gamma_0$ one has
\begin{equation*}
\left(\frac{p-1}{\delta_1}\right)^p \alpha_0 \left(\frac{\delta_1}{p-1} +\delta_1- \gamma_0\right) < \gamma_0^{1-p}\alpha_0(p-1)^{p-1}.
\end{equation*}
Moreover, \textcolor{black}{for example with} $p>2$ and $\alpha_0=\gamma_0=1$\textcolor{black}{,  if}  \eqref{smalllambda} holds with $\lambda<p-1 \in (0, (p-1)^{p-1})$\textcolor{black}{, then} we see that
\eqref{apri1} holds with $1\leq \delta< (p-1)(p-1)^{\frac{-1}{p-1}}$, but it does not allow us to take \textcolor{black}{$\delta=p-1$!} On the other hand, for $\lambda<p-1$ inequality  
\eqref{lambdacond2} holds with $\de_1=p-1$ and thus $e^{|u|}-1 \in {\V}$.
\end{remark}}

Due to the general structures of $\mathcal{A}$ and $\mathcal{B}$, here we do not claim that the solution $u$
 obtained in Theorem \ref{MainExistence} satisfies the Poincar\'e-Sobolev inequality \eqref{nablau-cond}.

The paper is organized as \textcolor{black}{follows:} In Section \ref{ActualExistence}, we provide the proof of Theorem \ref{MainExistence}.  This proof is based on  the existence of solutions to an {approximate} equation along with certain uniform bounds given in Section \ref{ApproxExist}.  These important uniform bounds are in turn deduced
from the a priori estimate of Section \ref{AprioriEst}, though not directly. Finally, the proof of Theorems \ref{weakzero} and \ref{Schro-type} will be given in Section \ref{main-proofs}.

\section{An a priori estimate}\label{AprioriEst}

In this section, we obtain certain  exponential type a priori bounds for solutions  of 
\begin{equation}\label{basic-ep}
-\dv \aa(x, u, \nabla u) +\varepsilon\,  |u|^{p-2} u= \mathcal{B}(x, u, \nabla u) + \sigma  \trm{in} \Omega,
\end{equation}
where $\varepsilon\geq0$. The case $\varepsilon>0$ will be needed in the next section to absorb certain unfavorable terms in the approximating process; see \eqref{ep-absorb} below. Earlier, this idea was 
implemented by V. Ferone and F. Murat   in \cite{FM3}.  
\begin{theorem}
 \label{regularity_Murat-Ferone} 
Let $\sigma={\rm div}\, F + f$ where \textcolor{black}{$F\in L^{\frac{p}{p-1}}(\Om,\RR^n)$} and $f$ is a locally finite signed measure in $\Om$ with $|f|\in (W^{1,p}_0(\Om))^*$ such that 
\eqref{smalllambda} holds
for all $\varphi\in C_c^\infty(\Om)$, with   $\lambda\in (0, \gamma_0^{1-p}\alpha_0(p-1)^{p-1}).$
Then for any  $\varepsilon \geq 0$ and any $W_0^{1,p}(\Om)$ solution $u$ to  equation \eqref{basic-ep}
such that $e^{\frac{\delta|u|}{p-1}}-1\in W^{1,p}_0(\Om),$
we have 
\begin{equation}\label{apri1}
 \|u\|_{\V} + \|e^{\frac{\delta|u|}{p-1}}-1\|_{\V} \leq M_{\de}. 
\end{equation}
provided $\de\in [\gamma_0, \de_0)$ where  $\delta_0=(p-1) (\alpha_{0}/\lambda)^{\frac{1}{p-1}}$. Here  $M_\delta$ is independent of $u$ and $\varepsilon$.

{Moreover, for any $\delta_1 > \gamma_0$ such that $e^{\frac{\delta_1|u|}{p-1}}-1\in W^{1,p}_0(\Om)$, and \eqref{smalllambda} holds with $\lambda$ satisfying \eqref{lambdacond2}, we have}
\begin{equation}\label{apri2}
 \|e^{\frac{\delta_1|u|}{p-1}}-1\|_{\V} \leq M_{\de_{1}} + C_{\de_{1}} \norm{\nabla u}_{L^p(\Om)}. 
\end{equation}
The constants $M_{\de_{1}}$  and $C_{\de_{1}}$ are independent of $u$ and $\varepsilon$. 
\end{theorem}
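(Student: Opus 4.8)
\textbf{Proof plan for Theorem \ref{regularity_Murat-Ferone}.}

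The plan is to test the equation \eqref{basic-ep} with an exponential test function built from $u$ and carefully track the constants. Concretely, for a parameter $\delta \in [\gamma_0, \delta_0)$ I would set, for each $k>0$, the truncation $T_k(u)=\max\{-k,\min\{k,u\}\}$ and use the test function $\varphi = \operatorname{sign}(u)\,(e^{\delta|T_k(u)|/(p-1)}-1)$ (or rather a smoothed/approximated version, justified by Lemma \ref{dis-mea} and the remark that \eqref{smalllambda} extends to $W^{1,p}_0$ test functions). Since $e^{\delta|u|/(p-1)}-1\in W^{1,p}_0(\Om)$ by hypothesis, this test function is admissible after the usual approximation. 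On the left-hand side, the principal term produces, via coercivity \eqref{coercivity}, a good term of the form $\tfrac{\alpha_0\delta}{p-1}\int |\nabla u|^p e^{\delta|T_k(u)|/(p-1)}\,\chi_{\{|u|\le k\}}\,dx$, while the $\varepsilon|u|^{p-2}u$ term has the favorable sign (it pairs with $\operatorname{sign}(u)$ times a nonnegative quantity) and can simply be dropped — this is exactly why the estimate is uniform in $\varepsilon\ge0$. On the right-hand side, the structural bound $\mathcal{B}(x,u,\nabla u)\operatorname{sign}(u)\le \alpha_0\gamma_0|\nabla u|^p$ gives a bad term $\alpha_0\gamma_0\int|\nabla u|^p(e^{\delta|T_k(u)|/(p-1)}-1)\,dx$, and the datum term $\langle\sigma,\varphi\rangle = \langle\operatorname{div}F+f,\varphi\rangle$ is estimated using \eqref{smalllambda} with the choice $\varphi$ above (more precisely, one applies \eqref{smalllambda}/\eqref{Fpower-cond}-type inequalities to the function $\psi:=e^{\delta|T_k(u)|/(2(p-1))}-1$ or directly to $\varphi$, since $\nabla\varphi = \tfrac{\delta}{p-1}\nabla u\, e^{\delta|T_k(u)|/(p-1)}\chi_{\{|u|\le k\}}$).

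The heart of the argument is the algebraic comparison of constants. Writing $v_k = e^{\delta|T_k(u)|/(p-1)}$, one finds $|\nabla(v_k-1)|^p = (\tfrac{\delta}{p-1})^p|\nabla u|^p v_k^p\chi_{\{|u|\le k\}}$, so the datum term contributes, via \eqref{smalllambda} applied to $\varphi=\operatorname{sign}(u)(v_k-1)$, something controlled by $\lambda\int|\nabla u|^p v_k^{\,?}\,dx$ plus lower-order pieces; the key point is that after combining the good term $\tfrac{\alpha_0\delta}{p-1}\int|\nabla u|^p v_k$ with the bad terms $\alpha_0\gamma_0\int|\nabla u|^p(v_k-1)$ and $\lambda(\tfrac{\delta}{p-1})^{p-1}\int|\nabla u|^p v_k$ (the last coming from $p\int|F||\varphi|^{p-1}|\nabla\varphi|$ together with $|\nabla\varphi|=\tfrac{\delta}{p-1}|\nabla u|v_k\chi$, i.e. a factor $\tfrac{\delta}{p-1}$ is pulled out $p$ times but one power of $v_k$ becomes $v_k-1$ via $|\varphi|=v_k-1\le v_k$), one needs the coefficient of $\int|\nabla u|^p v_k\,dx$ to come out positive. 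Matching powers, this requires
\[
\frac{\alpha_0\delta}{p-1} - \alpha_0\gamma_0 - \lambda\Big(\frac{\delta}{p-1}\Big)^{p-1} > 0,
\]
which upon rearrangement is precisely $\lambda < (\tfrac{p-1}{\delta})^{p-1}\alpha_0(\tfrac{\delta}{p-1}-\gamma_0)\cdot(\ldots)$; optimizing / checking endpoints shows $\delta<\delta_0=(p-1)(\alpha_0/\lambda)^{1/(p-1)}$ is exactly the admissible range when $\gamma_0$ is absorbed, and $\delta\ge\gamma_0$ is what makes $v_k-1\ge 0$-type manipulations work and the $\mathcal{B}$-sign condition usable. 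Once the coefficient is positive, letting $k\to\infty$ and using Fatou gives $\int|\nabla u|^p e^{\delta|u|/(p-1)}\,dx\le M$, hence both $\|u\|_{W^{1,p}_0}$ (since $e^{\delta|u|/(p-1)}\ge 1$) and $\|e^{\delta|u|/(p-1)}-1\|_{W^{1,p}_0}$ (directly, since $|\nabla(e^{\delta|u|/(p-1)}-1)|^p\lesssim|\nabla u|^p e^{\delta p|u|/(p-1)}$... — here one must be slightly careful: use instead the substitution $\delta\mapsto\delta/p$ inside, or equivalently iterate, to convert the $v_k$-weighted gradient bound into an $L^p$ bound on $\nabla(e^{\delta|u|/(p-1)}-1)$) are bounded by a constant $M_\delta$ depending only on $p,\alpha_0,\gamma_0,\lambda,\delta$ and the implicit constants, and crucially not on $\varepsilon$ or on the particular solution $u$.

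For the second assertion \eqref{apri2}, the plan is the same but now with $\delta_1>\gamma_0$ possibly outside the range giving a clean absorption: repeating the test-function computation with $\delta_1$ in place of $\delta$ and using \eqref{lambdacond2} in place of $\delta<\delta_0$, the coefficient of the leading weighted term is still positive, but the lower-order remainder — coming from the $-1$ shifts, i.e. terms like $\int|\nabla u|^p(v_k-1)\le\int|\nabla u|^p v_k$ where one cannot discard the subtracted constant for free — no longer closes up entirely by itself; instead one is left with a bound of the form $\|e^{\delta_1|u|/(p-1)}-1\|_{W^{1,p}_0}^p \le M + C\int|\nabla u|^p\,dx = M+C\|\nabla u\|_{L^p(\Om)}^p$, and taking $p$-th roots (and adjusting constants) yields \eqref{apri2}. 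The condition \eqref{lambdacond2}, namely $\lambda < (\tfrac{p-1}{\delta_1})^p\alpha_0(\tfrac{\delta_1}{p-1}+\delta_1-\gamma_0)$, is exactly what guarantees the leading coefficient stays positive once the bad $\mathcal{B}$-term with its shift and the $F$-term are both accounted for at the scale $\delta_1$.

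\emph{Main obstacle.} The genuinely delicate point is \textbf{getting the sharp constants right}: one must carefully normalize the exponential test function (the factor $\delta/(p-1)$, and whether to use exponent $\delta/(p-1)$ or $\delta/(2(p-1))$ so that squaring lands weights in the right place), apply \eqref{smalllambda} to the correctly chosen function so that the power of $v_k$ produced matches the power in the good term, and handle the $v_k$ versus $v_k-1$ discrepancy (which is harmless for an $\varepsilon$-free qualitative bound but is exactly what distinguishes the clean estimate \eqref{apri1} from the $\|\nabla u\|_{L^p}$-dependent estimate \eqref{apri2}). A secondary technical nuisance is justifying that $\varphi=\operatorname{sign}(u)(e^{\delta|T_k(u)|/(p-1)}-1)$ is an admissible test function and that one may integrate against $\sigma=\operatorname{div}F+f$ — this is where Lemma \ref{dis-mea}, the quasicontinuity remarks, and the extension of \eqref{smalllambda} to $W^{1,p}_0(\Om)$ are invoked; but these are routine given the tools already assembled in the excerpt.
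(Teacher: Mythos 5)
Your overall strategy (exponential test functions, dropping the $\varepsilon|u|^{p-2}u$ term by its sign, absorbing the $\mathcal{B}$-term through the second inequality in \eqref{Bcond}, and invoking \eqref{smalllambda} for the datum) is the right family of ideas, but the specific test function you propose, $\varphi=\operatorname{sign}(u)\,(e^{\mu|T_k(u)|}-1)$ with $\mu=\delta/(p-1)$, does not close, and your ``key algebraic comparison'' is not the inequality that actually comes out of the computation. The issue is a homogeneity mismatch. With your $\varphi$, coercivity \eqref{coercivity} yields the good term $\alpha_0\mu\int_\Om|\nabla u|^p e^{\mu|T_k(u)|}\chi_{\{|u|\le k\}}\,dx$, whose weight is $e^{\mu|u|}$ to the \emph{first} power; but the datum term, estimated by applying \eqref{smalllambda} to $\varphi$ itself, is bounded by $\lambda\int_\Om|\nabla\varphi|^p\,dx=\lambda\mu^{p}\int_\Om|\nabla u|^p e^{p\mu|T_k(u)|}\chi\,dx$, whose weight is $e^{p\mu|u|}$. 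These two cannot be compared for any $\lambda>0$, and the coefficient $\lambda(\delta/(p-1))^{p-1}$ attached to a first-power weight that you write down does not arise from any consistent application of \eqref{smalllambda}. Moreover, even ignoring the datum, absorbing $\alpha_0\gamma_0\int|\nabla u|^p(e^{\mu|u|}-1)$ into your good term forces $\mu>\gamma_0$, i.e.\ $\delta>(p-1)\gamma_0$, which for $p>2$ excludes the endpoint $\delta=\gamma_0$ asserted in the theorem (and needed later in Proposition \ref{approx-k}). Your fallback of ``substituting $\delta\mapsto\delta/p$ or iterating'' to repair the weight changes all the constants and does not recover the stated range $[\gamma_0,\delta_0)$.

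The device that fixes both problems is to multiply your test function by the extra factor $e^{\delta|u_s|}$: the paper tests with $v_s=e^{\delta|u_s|}w_s$, where $w_s=\operatorname{sign}(u)(e^{\mu|u_s|}-1)/\mu$. Since $\delta+\mu=p\mu$, the pairing of $\mathcal{A}$ with the part $e^{\delta|u_s|}\nabla w_s$ of $\nabla v_s$ gives exactly $\alpha_0\int|\nabla w_s|^p$, so the weight $e^{p\mu|u|}$ already sits on the good side; the datum term, after writing $e^{\delta|u_s|}=(1+\mu|w_s|)^{p-1}\le(1+\tilde\varepsilon)\mu^{p-1}|w_s|^{p-1}+C(\tilde\varepsilon,p)$, is controlled via \eqref{smalllambda} applied to $w_s$ by $(1+\tilde\varepsilon)\mu^{p-1}\lambda\int|\nabla w_s|^p$ plus lower order. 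Both sides now have the same homogeneity, and absorption reduces to $\alpha_0>\mu^{p-1}\lambda$, i.e.\ precisely $\delta<\delta_0$. Crucially, the additional term $-\delta\int|w_s|e^{\delta|u_s|}\mathcal{A}\cdot\nabla u_s$ produced by differentiating the factor $e^{\delta|u_s|}$ is what absorbs $\int\mathcal{B}\,e^{\delta|u_s|}w_s$ under the mere condition $\delta\ge\gamma_0$; this absorption never competes with the coercive term or with $\lambda$. The second estimate \eqref{apri2} follows the same scheme with $\delta_1$: the strictly negative surplus $(\gamma_0-\delta_1)$ is retained and converted, via $(e^{\mu_1|u_s|}-1)e^{\delta_1|u_s|}\ge(1-\varepsilon)e^{(\delta_1+\mu_1)|u_s|}-C(\varepsilon,\delta_1)$, into extra absorption at the price of a $C\|\nabla u\|_{L^p(\Om)}^p$ remainder, which is the origin of \eqref{lambdacond2}; your outline of this part is directionally right but inherits the same structural gap. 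Finally, the tail terms supported on $\{|u|>s\}$ (involving $b_0|\nabla u|^p+b_1|u|^m$) must be shown to vanish as $s\to\infty$, which uses $e^{p\mu|u|}\in L^1(\Om)$ via Sobolev embedding — a point your outline omits.
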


\begin{proof} Let  $u\in \wpo$ be a solution of \eqref{basic-ep} and define
$$w={\rm sign}(u)[e^{\mu|u|}-1]/\mu, \qquad {\rm with~} \mu=\delta/(p-1),$$
where ${\rm sign}(u)=0$ if $u=0$, ${\rm sign}(u)=1$ if $u>0$, and ${\rm sign}(u)=-1$ if $u<0$.
Then from the assumption $e^{\mu|u|}-1\in \wpo$, we see that $w\in \wpo$ with
\begin{equation}\label{deriofw}
\nabla w=e^{\mu|u|} \nabla u.
\end{equation}

Indeed, for $\varepsilon>0$ define $f_\varepsilon(x)= \frac{x}{\sqrt{x^2 +\varepsilon^2}}$, $x\in\RR$, {and
denote by $T_s$, $s>0$, the two-sided truncation operator at level $s$, i.e., 
\textcolor{black}{\begin{equation*}
T_s(r)=r \ {\rm ~if~} |r|\leq s \quad {\rm and} \quad  T_s(r)= {\rm sign}(r) s \ {\rm ~if~} |r|>s,
\end{equation*}
then it follows} that $T_s(u)\in W^{1,p}_0(\Om)$ for any $s>0$ and
\begin{eqnarray*}
\nabla\left[ f_\varepsilon(T_s(u)) (e^{\mu|T_s(u)|}-1)/\mu\right]&=&\frac{\nabla T_s(u) \ep^2}
{(T_s(u)^2  +  \ep^2)^{3/2}} (e^{\mu|T_s(u)|}-1)/\mu \\
&& +\, f_\ep(T_s(u)) \nabla (e^{\mu|T_s(u)|}-1)/\mu
\end{eqnarray*}
in the weak sense. Note that 
$$\varepsilon^2 (e^{\mu|T_s(u)|}-1)/\mu\leq \frac{e^{\mu s}}{\mu s} \, \varepsilon^2 |T_s(u)|\leq  \frac{e^{\mu s}}{\mu s} \, (|T_s(u)|^2  +  \ep^2)^{3/2},$$
$$f_\varepsilon(T_s(u))\rightarrow {\rm sign}(T_s(u))={\rm sign}(u) \text{ as } \varepsilon\rightarrow 0^{+},$$
and thus by Dominated Convergence Theorem we find
$$\nabla\left[ {\rm sign}(u) (e^{\mu|T_s(u)|}-1)/\mu\right]={\rm sign}(u) \nabla (e^{\mu|T_s(u)|}-1)/\mu=e^{\mu|T_s(u)|}\nabla T_s(u).$$
 \textcolor{black}{Now} using the assumption $e^{\mu|u|}|\nabla u|\in L^p(\Om)$ and letting $s\rightarrow \infty$, we obtain \eqref{deriofw}.}

 For each  $s>0$, we will use the following test function for  \eqref{basic-ep}:
$$v_s=e^{\de |u_s|} w_s,$$
where $u_s= T_s(u)$ and $w_s={\rm sign}(u) [e^{\mu |u_s|} -1]/\mu$ with $\mu=\delta/(p-1)$. 

From the definition of $w_s$ we have $|w_s| \leq |w|$ and $\nabla w_s =e^{\mu |u_s|}\nabla u_s$.
 Thus both $w_s$ and $v_s$   belong to $\wpo\cap L^\infty(\Om)$ and  moreover, 
$$\nabla v_s = \Big[e^{\de |u|} \nabla w + \de |w| e^{\de |u|}  \nabla u \Big]\chi_{\{|u|\leq s\}}.$$ 

Using $v_s$ as a test function in \eqref{basic-ep},  we get
\begin{eqnarray*}
 \lefteqn{\itl \aa(x, u, \nabla u) \cdot \nabla w e^{\de |u|} \chi_{\{|u|\leq s\}} \ dx + \varepsilon \itl |u|^{p-2} u e^{\delta |u_s|} w_s \ dx}\\
  &=&- \itl \de |w| e^{\de |u|} \aa(x, u, \nabla u)\cdot \nabla u \chi_{\{|u|\leq s\}} \ dx +\\
&&  + \ \itl \mathcal{B}(x, u,  \nabla u) e^{\de |u_s|} w_s \ dx + \itl F \cdot \nabla v_s  \ dx + \itl v_s df. 
\end{eqnarray*}

We now write this equality as
\begin{equation}\label{I12345}
I_1 + I_2  = I_3 + I_4 + I_5 +I_6,
\end{equation}
where $I_i$, $i\in\{1, \dots ,6\}$, are the corresponding terms.

\noindent {\bf Estimate for $I_1$:} Since  $\nabla w_s = e^{\mu |u_s|} \nabla u_s$, using the coercivity condition \eqref{coercivity}, we see that
 \begin{eqnarray} \label{I1}
  I_1 & =& \itl \aa(x, u_s, \nabla u_s) \cdot \nabla w_s e^{\de |u_s|} \chi_{\{|u|\leq s\}}  \ dx \\
& =& \itl \aa(x, u_s, \nabla u_s) \cdot \nabla u_s e^{(\mu+\de) |u_s|} \ dx \nonumber\\
& \geq& \alpha_0 \itl |\nabla w_s|^p \ dx, \nonumber
 \end{eqnarray}
where we  used the fact  $\mu + \de = p\mu$.

\noindent {\bf Estimate for $I_2$:} We have 
\begin{equation}\label{I1prime}
 I_2= \varepsilon \itl |u|^{p-1}  e^{\delta |u_s|} \frac{e^{\mu |u_s|}-1}{\mu}   \ dx \geq \varepsilon \,  s^{p-1} \itl e^{\delta s} \frac{e^{\mu s}-1}{\mu} \chi_{\{|u|>s\}}\geq 0.
\end{equation}

\noindent {\bf Estimate for $I_3 + I_4$:} By \eqref{coercivity} we have 
 \begin{eqnarray*}
  I_3 + I_4 &=& - \itl \de |w| e^{\de |u|} \aa(x, u, \nabla u)\cdot \nabla u \chi_{\{|u|\leq s\}} dx + \itl \mathcal{B}(x, u, \nabla u) e^{\de |u_s|} w_s dx \\
&\leq& - \itl \de \al_0 |w_s| e^{\de |u_s|}  |\nabla u_s|^p \ dx + \itl \mathcal{B}(x, u,  \nabla u) {\rm sign}(u) e^{\de |u_s|} |w_s| dx \\	
&=& - \itl \de \al_0 |w_s| e^{\de |u_s|}  |\nabla u_s|^p \ dx + \\
&& +\ \itl \mathcal{B}(x, u, \nabla u) {\rm sign}(u) e^{\de |u_s|} |w_s| [\chi_{\{|u|\leq s\}} + \chi_{\{|u|> s\}}]  dx.
 \end{eqnarray*}

Since we assume  $\de \geq \gamma_0$, this and the second condition in \eqref{Bcond}  imply that 
\begin{eqnarray}\label{I34B}
 I_3 + I_4 &\leq &  \itl (-\de+\gamma_0) \al_0 |w_s| e^{\de |u_s|}  |\nabla u_s|^p \ dx \\
&& +\ \itl \mathcal{B}(x, u, \nabla u) {\rm sign}(u) e^{\de |u_s|} |w_s| \chi_{\{|u|> s\}}  dx\nonumber\\
&\leq& \itl \mathcal{B}(x, u, \nabla u) {\rm sign}(u) e^{\de |u_s|} |w_s| \chi_{\{|u|> s\}}  dx.\nonumber
\end{eqnarray}

Thus by the first inequality on \eqref{Bcond} and the fact that 
\begin{eqnarray}\label{exptopo}
|v_s|&=& e^{\de |u_s|} |w_s|=(1+\mu |w_s|)^{\delta/\mu} |w_s|=(1+\mu |w_s|)^{p-1} |w_s|\\
 &\leq& \frac{1}{\mu}(1+\mu |w_s|)^{p}\leq \frac{1}{\mu} e^{p\mu |u_s|}\leq \frac{1}{\mu} e^{p\mu |u|},\nonumber
\end{eqnarray}
we find 
\begin{eqnarray}\label{I2I3}
I_3 + I_4 &\leq&  \itl (b_0 |\nabla u|^p + b_1 |u|^{m}) \frac{1}{\mu}e^{p\mu |u|} \chi_{\{|u|> s\}}  dx\\
&=&  \frac{1}{\mu}\itl b_0 |\nabla w|^p \chi_{\{|u|> s\}}  dx   + \frac{1}{\mu}\itl b_1 |u|^{m} e^{p\mu |u|} \chi_{\{|u|> s\}}  dx. \nonumber
\end{eqnarray}

\noindent {\bf Estimate for $I_5 +I_6$:} Using \eqref{exptopo}  again and Lemma \ref{dis-mea} we  have
\begin{eqnarray}\label{realize-sig}
 I_5 +I_6 &=& \itl F \cdot \nabla[(1+\mu |w_s|)^{p-1} w_s] \ dx + \itl v_s d f\\
&=&  \itl F \cdot [(p-1)(1+\mu |w_s|)^{p-2}\nabla w_s \, {\rm sign}(w_s)\, \mu  w_s] dx+ \nonumber\\
&& +\, \itl F \cdot [(1+\mu |w_s|)^{p-1} \nabla w_s] dx + \itl v_s df\nonumber\\
&\leq & p\itl |F|  (1+\mu |w_s|)^{p-1} |\nabla w_s| dx  +  \itl (1+\mu |w_s|)^{p-1}|w_s| d|f|.\nonumber
\end{eqnarray}

Using the inequality
$$(1+\mu |w_s|)^{p-1}\leq (1+{\color{black}\tilde{\varepsilon}}) \mu^{p-1}|w_s|^{p-1} + C({\color{black}\tilde{\varepsilon}}, p),\qquad {\color{black}\tilde{\varepsilon}}>0,$$ 
 and H\"older's inequality we have

\begin{eqnarray*}
I_5+ I_6&\leq& (1+{\color{black}\tilde{\varepsilon}})  \mu^{p-1} \, p \itl |F|   |w_s|^{p-1} |\nabla w_s|dx  + (1+{\color{black}\tilde{\varepsilon}}) \mu^{p-1} \itl |w_s|^p d|f| \\
&&+ \   C({\color{black}\tilde{\varepsilon}},p) \Big(\norm{F}_{L^{\frac{p}{p-1}}(\Om)}+\norm{|f|}_{(\wpo)^{*}}\Big) \norm{\nabla w_s}_{L^{p}(\Om)}.
 \end{eqnarray*}

We recall  that by approximation and Fatou's lemma \eqref{smalllambda} holds for all $\varphi\in \wpo$. 
 Then by \eqref{smalllambda} we get 
\begin{eqnarray}\label{I4I5}
I_5+ I_6&\leq& (1+{\color{black}\tilde{\varepsilon}})  \mu^{p-1} \lambda \norm{\nabla w_s}^{p}_{L^{p}(\Om)}+\\ 
&& +\    C({\color{black}\tilde{\varepsilon}},p ) \Big(\norm{F}_{L^{\frac{p}{p-1}}(\Om)}+\norm{|f|}_{(\wpo)^{*}}\Big) \norm{\nabla w_s}_{L^{p}(\Om)}.\nonumber
\end{eqnarray}

We now use estimates \eqref{I1}, \eqref{I1prime}, \eqref{I2I3} and \eqref{I4I5} in equality \eqref{I12345} to obtain the following bound
\begin{eqnarray*}
\kappa(\varepsilon)\norm{\nabla w_s}^{p}_{L^{p}(\Om)}&\leq& \frac{1}{\mu}\itl b_0 |\nabla w|^p \chi_{\{|u|> s\}}  dx   +  \frac{1}{\mu}\itl b_1 |u|^{m} e^{p\mu|u|} \chi_{\{|u|> s\}}  dx +\\
&& +\ C({\color{black}\tilde{\varepsilon}}, p)\Big(\norm{F}_{L^{\frac{p}{p-1}}(\Om)}+\norm{|f|}_{(\wpo)^{*}}\Big) \norm{\nabla w_s}_{L^{p}(\Om)},
\end{eqnarray*}
where $\kappa(\varepsilon)=\alpha_0 - (1+\varepsilon)  \mu^{p-1} \lambda$. Observe that when $\delta<\delta_0=(p-1) (\alpha_{0}/\lambda)^{\frac{1}{p-1}}$ we have
$$\mu^{p-1}\lambda= (\de/(p-1))^{p-1}\lambda < (\de_0/(p-1))^{p-1}\lambda = \alpha_0$$ and thus we can choose $\varepsilon>0$ small enough so that $\kappa(\varepsilon)>0$.

Since {$(e^{\mu |u|}-1) \in \wpo$}, by Sobolev's embedding theorem it holds that $e^{p\mu |u|}\in L^{\frac{n}{n-p}}(\Om)$ {if $1<p<n$ and 
$e^{p\mu |u|}\in L^{2}(\Om)$, say, if $p\geq n$}.  Thus we have $|u|^m e^{p\mu |u|} \in L^{1}(\Om)$. Now letting  $s\textcolor{black}{\nearrow}\infty$  in the last \textcolor{black}{inequality,}  we find
\begin{equation*}
 \norm{\nabla w}^{p}_{L^{p}(\Om)} \leq C\Big(\norm{F}_{L^{\frac{p}{p-1}}(\Om)}+\norm{|f|}_{(\wpo)^{*}}\Big) \norm{\nabla w}_{L^{p}(\Om)},
\end{equation*}
which yields 
\begin{equation*}
 \norm{e^{\de |u|/(p-1)}-1}_{\wpo} \leq C(\delta, \lambda, p)\Big(\norm{F}_{L^{\frac{p}{p-1}}(\Om)}+\norm{|f|}_{(\wpo)^{*}}\Big)^{\frac{1}{p-1}}.
\end{equation*}

Finally, note that 
$$\norm{u}_{\wpo}=\norm{\nabla u}_{L^p(\Om)}\leq \frac{p-1}{\de}\norm{\nabla(e^{\de |u|/(p-1)}-1)}_{L^p(\Om)}$$
and \textcolor{black}{hence,}  we also have   
\begin{equation*}
 \norm{u}_{\wpo} \leq C(\delta, \lambda, p)\Big(\norm{F}_{L^{\frac{p}{p-1}}(\Om)}+\norm{|f|}_{(\wpo)^{*}}\Big)^{\frac{1}{p-1}}.
\end{equation*}
This proves inequality \eqref{apri1} for all $\delta\in [\gamma_0, \delta_0)$.

To prove inequality \eqref{apri2} for $\de_{1}$,  we first define $\mu_1=\frac{\de_{1}}{p-1}$ and redefine 
\begin{equation}\label{redefinedw}
 w= {\rm sign}(u) [e^{\mu_1 |u|}-1]/\mu_{1}, \qquad w_s= {\rm sign}(u) [e^{\mu_1 |u_s|}-1]/\mu_{1}.
\end{equation}

Observe that 
$$(e^{\mu_1 |u_s|} -1) e^{\de_1 |u_s|}\geq (1-\varepsilon) e^{ (\delta_1 +\mu_1)|u_s|} -C(\varepsilon,\delta_1) \qquad \text{~for~all~} \varepsilon\in (0,1),$$
and thus  by the first inequality in \eqref{I34B}, with $(\de_1, \mu_1)$ in place of $(\de,\mu)$, we have 
\begin{eqnarray*}
 I_3 + I_4 &\leq &  \itl (-\de_1+\gamma_0) \frac{\al_0}{\mu_1}  (e^{\mu_1 |u_s|} -1) e^{\de_1 |u_s|}  |\nabla u_s|^p \ dx \\
&& +\ \itl \mathcal{B}(x, u, \nabla u) {\rm sign}(u) e^{\de_1 |u_s|} |w_s| \chi_{\{|u|> s\}}  dx\nonumber\\
&\leq& \itl (1-\varepsilon)(-\de_1+\gamma_0) \frac{\al_0}{\mu_1}  |\nabla w_s|^p dx + \itl C(\varepsilon,\delta_1)(\de_1-\gamma_0) \frac{\al_0}{\mu_1}  |\nabla u_s|^p \ dx \nonumber\\
&& +\ \itl \mathcal{B}(x, u, \nabla u) {\rm sign}(u) e^{\de_1 |u_s|} |w_s| \chi_{\{|u|> s\}}  dx.\nonumber
\end{eqnarray*}
Here in the last inequality we used that $\de_1 > \gamma_0$ and $|\nabla w_s|^p= e^{ (\delta_1 +\mu_1)|u_s|} |\nabla u_s|^p$.

Thus arguing as in \eqref{I2I3} for the last term we find
\begin{eqnarray}\label{I34second}
 I_3 + I_4 &\leq& \itl (1-\varepsilon)(-\de_1+\gamma_0) \frac{\al_0}{\mu_1}  |\nabla w_s|^p dx + C(\varepsilon) \itl  |\nabla u_s|^p \ dx \\
&& +\, \frac{1}{\mu_1}\itl b_0 |\nabla w|^p \chi_{\{|u|> s\}}  dx   + \frac{1}{\mu_1}\itl b_1 |u|^{m} e^{p\mu_1 |u|} \chi_{\{|u|> s\}}  dx. \nonumber
\end{eqnarray}

Using estimates \eqref{I1}, \eqref{I1prime},  \eqref{I4I5} (with $(\de_1, \mu_1)$ in place of $(\de,\mu)$) and \eqref{I34second} in equality \eqref{I12345} we then get
\begin{eqnarray*}
\kappa_1(\varepsilon)\norm{\nabla w_s}^{p}_{L^{p}(\Om)}&\leq& \frac{1}{\mu_1}\itl b_0 |\nabla w|^p \chi_{\{|u|> s\}}  dx   +  \frac{1}{\mu_1}\itl b_1 |u|^{m} e^{p\mu_1|u|} \chi_{\{|u|> s\}}  dx +\\
&& +\ C(\varepsilon)\Big(\norm{F}_{L^{\frac{p}{p-1}}(\Om)}+\norm{|f|}_{(\wpo)^{*}}\Big) \norm{\nabla w_s}_{L^{p}(\Om)} +\\
&& +\ C(\varepsilon) \itl  |\nabla u_s|^p \ dx,
\end{eqnarray*}
where $\kappa_1(\varepsilon)=\alpha_0 + (1-\varepsilon)(\de_1-\gamma_0) \frac{\al_0}{\mu_1} - (1+\varepsilon)  \mu_{1}^{p-1} \lambda$, with $\varepsilon\in (0,1)$.  Thus when \eqref{lambdacond2} 
holds we can find $\varepsilon\in (0,1)$ such that $\kappa_1(\varepsilon)>0$. Then using Young's inequality and letting $s\rightarrow\infty$ we eventually obtain
\begin{eqnarray*}
\norm{\nabla w}^{p}_{L^{p}(\Om)}&\leq&  C \Big(\norm{F}_{L^{\frac{p}{p-1}}(\Om)}+\norm{|f|}_{(\wpo)^{*}}\Big)^{\frac{1}{p-1}}  + C  \itl  |\nabla u|^p \ dx.
\end{eqnarray*}

This proves inequality \eqref{apri2} for all $\delta_1 >  \gamma_0$ such \eqref{lambdacond2} holds.
\end{proof}

\section{Existence of solutions to an {approximate} equation} \label{ApproxExist}
{For $k>0$, we now define a function $\mathcal{H}_k(x,s,\xi)$ by letting 
\begin{equation}\label{HK}
\textcolor{black}{\mathcal{H}_k(x,s,\xi):= \frac{\mathcal{B}(x,s,\xi)}{1+ \frac{1}{k}|\, \mathcal{B}(x,s,\xi)| }.}
\end{equation}
Note  $|\mathcal{H}_k(x,s,\xi)|\leq k$, and \eqref{Bcond} also holds with $\mathcal{H}_k(x,s,\xi)$ in place of $\mathcal{B}(x,s,\xi)$.
 Moreover,
$$\lim_{k\rightarrow\infty}\mathcal{H}_k(x,s,\xi)=\mathcal{B}(x,s,\xi).$$}

{
The goal of this section is to obtain existence results for the {approximate} equation
\begin{equation}\label{kequ}
-\dv \aa(x, u, \nabla u) = \mathcal{H}_k(x, u, \nabla u) + \sigma  \trm{in} \Omega.
\end{equation}
}
\begin{proposition}\label{approx-k} Let $\sigma={\rm div}\, F + f$ where \textcolor{black}{$F\in L^{\frac{p}{p-1}}(\Om,\RR^n)$} and $f$ is a locally finite signed measure in $\Om$ with $|f|\in (W^{1,p}_0(\Om))^*$ such that \eqref{smalllambda} holds
for all $\varphi\in C_c^\infty(\Om)$, with   $\lambda\in (0, \gamma_0^{1-p}\alpha_0(p-1)^{p-1}).$
  Then for \textcolor{black}{each $k>0$,}  
there exists a  solution $u_{k}\in W_0^{1,p}(\Om)$ to  \eqref{kequ}  
such that $e^{\frac{\delta|u_k|}{p-1}}-1\in W^{1,p}_0(\Om)$  for all $\de\in [\gamma_0, \de_0)$,  with $\delta_0=(p-1) (\alpha_{0}/\lambda)^{\frac{1}{p-1}}$, and 
\begin{equation}\label{kexp}
 \|u_k\|_{\V} + \|e^{\frac{\delta|u_k|}{p-1}}-1\|_{\V} \leq M_{\de}. 
\end{equation}

Moreover,  for any $\delta_1 > \gamma_0$ such that  \eqref{lambdacond2} holds then we have
\begin{equation}\label{apri2-k}
 \|e^{\frac{\delta_1|u_k|}{p-1}}-1\|_{\V} \leq M_{\de_{1}}, 
\end{equation}
Here the constants $M_\delta$ and $M_{\de_{1}}$  are independent of  $k$.
\end{proposition}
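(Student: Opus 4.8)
The plan is to prove Proposition~\ref{approx-k} by solving the truncated equation \eqref{kequ} for each fixed $k$ via a standard fixed‑point/surjectivity argument for Leray--Lions operators, and then to promote the resulting solution to one satisfying the exponential integrability bounds \eqref{kexp} and \eqref{apri2-k} by invoking the a priori estimates of Theorem~\ref{regularity_Murat-Ferone}. Since $|\mathcal{H}_k(x,s,\xi)|\le k$ uniformly, the bounded perturbation $\mathcal{H}_k(x,u,\nabla u)$ is harmless for existence: first I would note that $\sigma = \dv F + f\in (W^{1,p}_0(\Om))^*$ because $F\in L^{p/(p-1)}$ and, by Lemma~\ref{dis-mea} together with \eqref{smalllambda} (which gives $|f|\in (W^{1,p}_0(\Om))^*$ as a capacitary condition), $f$ pairs with $W^{1,p}_0(\Om)$. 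Then existence of \emph{some} $u_k\in W^{1,p}_0(\Om)$ solving \eqref{kequ} follows from the classical theory: the operator $u\mapsto -\dv\aa(x,u,\nabla u)$ is bounded, coercive (by \eqref{coercivity}), and pseudomonotone (by \eqref{monotone-strict} and \eqref{growth-p}) on $W^{1,p}_0(\Om)$; adding the bounded lower‑order term $\mathcal{H}_k$ keeps it pseudomonotone, and Leray--Lions surjectivity (Schauder fixed point on the truncation, or the standard degree argument) produces $u_k$.

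Next I would address the exponential integrability, which is the crux. Theorem~\ref{regularity_Murat-Ferone} is stated for solutions $u$ that are already known to satisfy $e^{\delta|u|/(p-1)}-1\in W^{1,p}_0(\Om)$, so it cannot be applied directly to the raw $u_k$ coming from Leray--Lions theory. The remedy is to run the existence scheme at the level of the penalized equation \eqref{basic-ep}: for $\varepsilon>0$ the added term $\varepsilon|u|^{p-2}u$ makes the operator strictly monotone and the test‑function computation in the proof of Theorem~\ref{regularity_Murat-Ferone} produces \emph{finite} right‑hand sides provided one can justify that the solution lies in the exponential class to begin with. I would instead argue by a bootstrap on truncations: since $|\mathcal{H}_k|\le k\in L^\infty\subset (W^{1,p}_0)^*\cap L^1$, the solution $u_k$ of \eqref{kequ} is in fact a solution of $-\dv\aa(x,u_k,\nabla u_k) = g_k + \sigma$ with $g_k\in L^\infty(\Om)$, and for such equations one has, by the Br\'ezis--Browder lemma and the admissibility of $W^{1,p}_0\cap L^\infty$ test functions noted after the definition of solution, the right to test with $v_s = e^{\delta|u_{k,s}|}w_s$ where $u_{k,s}=T_s(u_k)$ — these are bounded $W^{1,p}_0$ functions regardless of any a priori exponential bound. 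Carrying out exactly the computation of \eqref{I12345}–\eqref{I4I5} with $\varepsilon=0$ and $\mathcal{B}$ replaced by $\mathcal{H}_k$ (which satisfies the same structural bounds \eqref{Bcond} with the same constants), and using that $\chi_{\{|u_k|>s\}}\to 0$ together with $|u_k|^m e^{p\mu|u_k|}\in L^1$ once we know $e^{\mu|u_k|}-1\in W^{1,p}_0$, gives the bound \eqref{kexp}; the subtlety is that this last integrability is itself part of what we are proving, so the argument must be arranged as: first obtain from the truncated test functions a bound on $\|\nabla w_{k,s}\|_{L^p}$ uniform in $s$ with a right‑hand side involving only $\|u_k\|_{W^{1,p}_0}$ and the data norms (the $\chi_{\{|u_k|>s\}}$ terms being controlled because $\nabla w = e^{\mu|u_k|}\nabla u_k$ only needs $w_{k,s}\in W^{1,p}_0$, which holds for each fixed $s$), then let $s\to\infty$ via Fatou to conclude $w_k\in W^{1,p}_0$, i.e. $e^{\mu|u_k|}-1\in W^{1,p}_0$, and finally feed $u_k$ back into Theorem~\ref{regularity_Murat-Ferone} to get the \emph{uniform‑in‑$k$} constant $M_\delta$.

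For the second bound \eqref{apri2-k} I would do the same with the redefinition \eqref{redefinedw} at level $\delta_1$: once $e^{\delta_1|u_k|/(p-1)}-1\in W^{1,p}_0(\Om)$ is known (again by the truncation bootstrap, which works for any $\delta_1\ge\gamma_0$ for which the corresponding $\kappa_1(\varepsilon)>0$, i.e. under \eqref{lambdacond2}), inequality \eqref{apri2} of Theorem~\ref{regularity_Murat-Ferone} gives $\|e^{\delta_1|u_k|/(p-1)}-1\|_{\V}\le M_{\delta_1}+C_{\delta_1}\|\nabla u_k\|_{L^p(\Om)}$, and then the already‑established \eqref{kexp} bounds $\|\nabla u_k\|_{L^p(\Om)}\le M_{\gamma_0}$ uniformly in $k$, yielding \eqref{apri2-k} with a constant independent of $k$. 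The main obstacle, and the point needing the most care in the full write‑up, is precisely the circularity in the first step: justifying that $u_k$ belongs to the exponential class \emph{before} applying the a priori theorem. I expect to resolve it exactly as sketched — by working with the two‑sided truncations $u_{k,s}$, for which all test functions are legitimately in $W^{1,p}_0\cap L^\infty$, deriving an $s$‑uniform bound on $\nabla w_{k,s}$ whose right‑hand side does not secretly depend on the exponential integrability, and only then passing $s\to\infty$. A minor additional point is checking that the Leray--Lions solution, constructed perhaps with the $\varepsilon|u|^{p-2}u$ penalization and then letting $\varepsilon\to 0^+$ using the uniform bounds from Theorem~\ref{regularity_Murat-Ferone}, indeed converges to a solution of \eqref{kequ} — this is routine because the uniform exponential bound gives strong $W^{1,p}_0$ compactness of the truncations and almost‑everywhere convergence of the gradients via the standard monotonicity trick.
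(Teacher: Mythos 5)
Your overall architecture (truncated test functions $v_s=e^{\delta|T_s(u)|}w_s$, an $s$-uniform bound on $\|\nabla w_s\|_{L^p}$, then $s\to\infty$, then an appeal to Theorem \ref{regularity_Murat-Ferone} for the $k$-independent constants) matches the paper, and you correctly identify the circularity that prevents applying the a priori estimate directly. But your resolution of that circularity does not close. Testing the \emph{un-penalized} equation \eqref{kequ} with $v_s$, the estimate of $I_3+I_4'$ leaves the residual tail term
\begin{equation*}
\int_{\Om}\mathcal{H}_k(x,u_k,\nabla u_k)\,{\rm sign}(u_k)\,e^{\delta|T_s(u_k)|}|w_s|\,\chi_{\{|u_k|>s\}}\,dx
\;\le\; k\,e^{\delta s}\,\frac{e^{\mu s}-1}{\mu}\,\bigl|\{|u_k|>s\}\bigr|,
\end{equation*}
and with only $u_k\in W^{1,p}_0(\Om)$ in hand, Chebyshev gives a measure $|\{|u_k|>s\}|$ that decays at most polynomially in $s$, so this right-hand side behaves like $e^{(\delta+\mu)s}$ times a polynomial decay and is \emph{not} bounded uniformly in $s$. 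Hence the claimed ``$s$-uniform bound on $\|\nabla w_{k,s}\|_{L^p}$ whose right-hand side involves only $\|u_k\|_{W^{1,p}_0}$ and the data norms'' is false as stated, and the Fatou passage $s\to\infty$ cannot be launched. (Mere existence of some $u_k\in W^{1,p}_0(\Om)$ via Leray--Lions is indeed unproblematic; the gap is entirely in promoting it to the exponential class.)

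The paper's fix is precisely the $\varepsilon|u|^{p-2}u$ penalization that you mention but set aside, and its purpose is not strict monotonicity: the corresponding term $I_2=\varepsilon\int_{\Om}|u|^{p-1}e^{\delta|T_s(u)|}\frac{e^{\mu|T_s(u)|}-1}{\mu}\,dx\ \ge\ \varepsilon\, s^{p-1}e^{\delta s}\frac{e^{\mu s}-1}{\mu}|\{|u|>s\}|$ sits on the good side of the identity and absorbs the tail term above as soon as $\varepsilon s^{p-1}\ge k$; this is exactly \eqref{ep-absorb}. One therefore solves the penalized equation \eqref{approx-ep} for each $\varepsilon>0$, obtains the exponential bounds for $u_{k,\varepsilon}$ uniformly in $\varepsilon$ and $k$ by restricting to $s\ge(k/\varepsilon)^{1/(p-1)}$ before letting $s\to\infty$, and only then lets $\varepsilon\searrow 0$, using that $\mathcal{H}_k(x,u_{k,\varepsilon},\nabla u_{k,\varepsilon})-\varepsilon|u_{k,\varepsilon}|^{p-2}u_{k,\varepsilon}$ is uniformly bounded as measures so that the Boccardo--Murat theorem gives a.e. convergence of the gradients. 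Your final sentence gestures at this route but treats it as ``a minor additional point''; it is in fact the essential mechanism of the proof, and without it your bootstrap has a genuine gap.
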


\begin{proof} Since $\sigma\in (\wpo)^{*}$ and  \textcolor{black}{$|{\mathcal{H}}_k(x, s, \xi)|\leq k$, }	 by the theory of pseudomonotone operators (see, e.g., \cite{Li}, \cite[Chapter 6]{MZ}, and \cite{Bre}), for any $\varepsilon>0$ there exists a solution $u_{k, \varepsilon}\in \wpo$ to the equation 
\begin{equation}\label{approx-ep}
-\dv \aa(x, u, \nabla u)  +\varepsilon |u|^{p-2} u= {\mathcal{H}}_k(x, u, \nabla u) + \sigma  \trm{in} \Omega.
\end{equation}

{The next step is to obtain uniform bounds of the form \eqref{kexp}-\eqref{apri2-k} for $\{u_{k, \varepsilon}\}$.
However, we cannot directly apply Theorem   \ref{regularity_Murat-Ferone} here since we do not  know if 
$e^{\frac{\delta|u_{k, \varepsilon}|}{p-1}}-1\in W^{1,p}_0(\Om)$. The strategy here is to follow the proof of Theorem \ref{regularity_Murat-Ferone}. For simplicity let us write $u=u_{k, \varepsilon}$,} 
and  for each $s>0$, we set
$v_s=e^{\de |u_s|} w_s,$
where $u_s= T_s(u)$ and $w_s={\rm sign}(u) [e^{\mu |u_s|} -1]/\mu$ with $\mu=\delta/(p-1)$. Then using $v_s$ as a test function for 
\eqref{approx-ep} we obtain the following equality
\begin{equation}\label{I12345prime}
I_1+I_2=I_3+I_4'+I_5+I_6,
\end{equation}
where the expressions for $I_1, I_2, I_3, I_5, I_6$ are as in the proof of Theorem \ref{regularity_Murat-Ferone}. The term $I_4'$ is similar to $I_4$ given in the 
proof of Theorem \ref{regularity_Murat-Ferone} except that $\mathcal{B}(x, u, \nabla u)$ is now replaced by ${\mathcal{H}}_k(x, u, \nabla u)$. 
That is, 
$$I_4':= \itl {\mathcal{H}}_k(x, u, \nabla u) e^{\de |u_s|} w_s \ dx.$$

Thus lower estimates for $I_1$, $I_2$ and upper estimates for $I_5+I_6$ are unchanged; see \eqref{I1}, \eqref{I1prime},  and \eqref{I4I5}. As in \eqref{I34B}
 we have the following upper estimate for $I_3+I_4'$:
\begin{eqnarray*}
 I_3 + I_4' &\leq &  \itl (-\de+\gamma_0) \al_0 |w_s| e^{\de |u_s|}  |\nabla u_s|^p \ dx \\
&& +\ \itl {\mathcal{H}}_k(x, u, \nabla u) {\rm sign}(u) e^{\de |u_s|} |w_s| \chi_{\{|u|> s\}}  dx\nonumber\\
&\leq& \itl {\mathcal{H}}_k(x, u, \nabla u) {\rm sign}(u) e^{\de |u_s|} |w_s| \chi_{\{|u|> s\}}  dx.\nonumber
\end{eqnarray*}

Thus, instead of \eqref{I2I3}, we now get
\begin{equation}\label{I34prime-k}
I_3 + I_4' \leq  k \itl  e^{\de s} \frac{e^{\mu s}-1}{\mu} \chi_{\{|u|> s\}}  dx. 
\end{equation}

Similarly, instead of \eqref{I34second}, we now obtain
\begin{eqnarray}\label{I34second-k}
 I_3 + I_4' &\leq& \itl (1-\varepsilon)(-\de_1+\gamma_0) \frac{\al_0}{\mu_1}  |\nabla w_s|^p dx + C(\varepsilon) \itl  |\nabla u_s|^p \ dx \\
&& +\,  k \itl  e^{\de s} \frac{e^{\mu s}-1}{\mu} \chi_{\{|u|> s\}}  dx.  \nonumber
\end{eqnarray}
We recall that in \eqref{I34second-k},  $\mu_1=\frac{\de_1}{p-1}$ with  $w, w_s$ to be understood as in \eqref{redefinedw}.

When $\varepsilon>0$ and $s$ is such that $\varepsilon s^{p-1}\geq k$ by  \eqref{I1prime} and  \eqref{I34prime-k} we have 
\begin{equation}\label{ep-absorb}
 I_3 + I_4' -I_2\leq 0
\end{equation}
and thus it follows from \eqref{I12345prime} that 
$$I_1 \leq I_5+ I_6.$$

With this, employing \eqref{I1}  and \eqref{I4I5} we find
\begin{equation*}
\norm{\nabla w_s}_{L^p(\Om)} \leq C(\delta, \lambda,p)\Big(\norm{F}_{L^{\frac{p}{p-1}}(\Om)}+\norm{|f|}_{(\wpo)^{*}}\Big)^{\frac{1}{p-1}}.
\end{equation*}
At this point we let $s\textcolor{black}{\nearrow}\infty$ to obtain that any solution $u=u_{k,\varepsilon}$ to \eqref{approx-ep} satisfies the bound
\textcolor{black}{\begin{equation}\label{exp}
\begin{array}{l}
{\norm{u_{k,\varepsilon}}_{\wpo}+  \norm{e^{\frac{\de |u_{k, \varepsilon}|}{p-1}}-1}_{\wpo}} \leq\\
 \hspace*{3cm} \leq C(\delta, \lambda,p)\Big(\norm{F}_{L^{\frac{p}{p-1}}(\Om)}+\norm{|f|}_{(\wpo)^{*}}\Big)^{\frac{1}{p-1}}
 \end{array}
\end{equation}}
for every $\delta\in [\gamma_0, \delta_0)$.

For $\de_1>\gamma_0$ such that \eqref{lambdacond2} holds, using \eqref{I34second-k} and arguing similarly we obtain
\begin{eqnarray}\label{de1-k}
\norm{e^{\frac{\de_1 |u_{k, \varepsilon}|}{p-1}}-1}_{\wpo}  &\leq&  C \Big(\norm{F}_{L^{\frac{p}{p-1}}(\Om)}+\norm{|f|}_{(\wpo)^{*}}\Big)^{\frac{1}{p-1}}\\
&&  +\,  C  \itl  |\nabla u_{k, \ep}|^p \ dx\nonumber\\
&\leq&  C \Big(\norm{F}_{L^{\frac{p}{p-1}}(\Om)}+\norm{|f|}_{(\wpo)^{*}}\Big)^{\frac{1}{p-1}}. \nonumber
\end{eqnarray}

 As the bound \eqref{exp} is uniform in $\varepsilon$, we can   extract a subsequence, still denoted by $\varepsilon$, such that 
$$u_{k,\varepsilon} \rightarrow u_k \text{~weakly~in~} \wpo,    \text{~strongly~in~} L^p(\Om), \text{~and~a.e.~in~} \Om,$$
as $\varepsilon\textcolor{black}{\searrow} 0^{+}$ for a function $u_k\in \wpo$. 
Due to the pointwise a.e. convergence,  we see that $u_k$ also satisfies \eqref{exp}-\eqref{de1-k} for every $\delta\in [\gamma_0, \delta_0)$ and every $\de_1>\gamma_0$ such
that \eqref{lambdacond2} holds.

Recall that we have 
\begin{equation}\label{ekequn}
-\dv \aa(x, u_{k,\varepsilon},\nabla u_{k,\varepsilon})  +\varepsilon |u_{k,\varepsilon}|^{p-2} u_{k,\varepsilon}= {\mathcal{H}}_k(x, u_{k,\varepsilon}, \nabla u_{k,\varepsilon}) + \sigma  \quad {\rm in~} \mathcal{D}'(\Omega).
\end{equation}

For each fixed $k>0$, we know ${\mathcal{H}}_k(x, u_{k,\varepsilon}, \nabla u_{k,\varepsilon})-\varepsilon |u_{k,\varepsilon}|^{p-2} u_{k,\varepsilon}$ is uniformly bounded in $\varepsilon\in (0,1)$ as finite measures in $\Om$. Thus by a convergence result shown in \textcolor{black}{\cite[Eqn (2.26)]{BM}},
we may further assume that 
$$\nabla u_{k,\varepsilon}\rightarrow \nabla u_k \quad \text{a.e.~in~} \Om, \text{~as~} \varepsilon\textcolor{black}{\searrow} 0^+.$$

This allows us to pass to the limit in \eqref{ekequn} as $\varepsilon\textcolor{black}{\searrow} 0^+$ to see that $u_k$ solves \eqref{kequ} and satisfies the bounds \eqref{kexp}-\eqref{apri2-k}.
\end{proof}

\section{Proof of Theorem \ref{MainExistence}}\label{ActualExistence}

This section is devoted to the proof of Theorem \ref{MainExistence}.

\begin{proof}
For each $k>0$, let $u_k$ be a solution of the {approximate} equation \eqref{kequ} as obtained in Proposition \ref{approx-k}. Recall that 
$\mathcal{H}_k(x, s, \xi)$ is defined in \eqref{HK}.
By \eqref{kexp}-\eqref{apri2-k} and Rellich's compactness theorem, there is a subsequence, still denoted by $k$, such that  
$$u_{k} \xrightarrow{k} u \text{~weakly~in~} \wpo,    \text{~strongly~in~} L^p(\Om), \text{~and~a.e.~in~} \Om,$$
 for some function $ u \in \wpo$ such that $e^{\frac{\delta|u|}{p-1}}-1\in W^{1,p}_0(\Om)$ for each $\de\in [\gamma_0, \de_0)$, and
$e^{\frac{\delta_1|u|}{p-1}}-1\in W^{1,p}_0(\Om)$ for any  $\de_1 >\gamma_0$ such that \eqref{lambdacond2} holds.

As $u_k$ solves \eqref{kequ}, we have 
\begin{equation}\label{kequ-k}
-\dv \aa(x, u_k, \nabla u_k) = {\mathcal{H}}_k(x, u_k, \nabla u_k) + \sigma  \trm{in} \Omega.
\end{equation}

Thus to show that $u$ is a solution of \eqref{basic_pde3} it is enough to show that 
\begin{equation}\label{strongconv}
u_k\rightarrow u \quad \text{strongly~in~} \wpo \text{~as~} k\textcolor{black}{\nearrow}\infty,
\end{equation}
so that we can pass to the limit in \eqref{kequ-k} using \eqref{Bcond}, \eqref{kexp}, and  Vitali's {Convergence} Theorem.

For each $s>0$ we can write
$$\nabla u_k-\nabla u= \nabla T_s(u_k)- \nabla T_s(u) + \nabla G_s(u_k)- \nabla G_s(u),$$
where 
$$G_s(r):=r-T_s(r), \qquad r\in\RR.$$

In order to show \eqref{strongconv} we shall show that the following \textcolor{black}{limits} hold: 
\begin{gather}
\lim_{s\rightarrow\infty}\ \sup_{k>0}\norm{\nabla G_s(u_k)- \nabla G_s(u)}_{L^p(\Om)} =0 \label{tail} \\
\lim_{k\rightarrow\infty}\norm{\nabla T_s(u_k)- \nabla T_s(u)}_{L^p(\Om)} =0 \quad \text{for each } s>0.\label{head}
\end{gather}

The rest of the proof will be devoted to the verification of  these limits.

\noindent {\bf Proof of \eqref{tail}.}
Define $w_k = [e^{\frac{\de}{p-1} |u_k| }-1]\tfrac{p-1}{\delta}$ and hence we get
 \begin{eqnarray*}
  \itl |\nabla G_s(u_k)|^p \ dx&=&\itl[\{|u_k|>s\}] |\nabla u_k|^p \ dx \\
	&=&  \itl[\{|u_k|>s\}] e^{-\frac{\de p}{p-1}|u_k|}|\nabla w_k|^p \ dx \\
  &\leq&  e^{-\frac{\de p}{p-1}s}\itl[\{|u_k|>s\}] |\nabla w_k|^p \ dx.
 \end{eqnarray*}

 Using the  estimate \eqref{kexp}, we then find
\begin{equation}\label{taildelta}
\itl |\nabla G_s(u_k)|^p \ dx\leq C(\delta) e^{-\frac{\de p}{p-1}s},
\end{equation}
  which yields \eqref{tail}.

\noindent {\bf Proof of \eqref{head}.} Following \cite{FM3} (see also the earlier works \cite{FM2, BBM}), we shall make use of the following test function in \eqref{kequ-k}: $$ v_k = \exptk \psi(z_k), \quad {\rm with~} j\geq s,$$ where $z_k = T_s(u_k) - T_s(u)$ and $\psi$ is a $C^1$ and increasing function from $\RR$ to $\RR$ satisfying 
\begin{equation}\label{psicond}
\psi(0) = 0 \quad \text{and} \quad \psi' - H_0 |\psi| \geq 1,
\end{equation}
where $H_0=\frac{b_0+  (a_0+a_1) \de}{\alpha_0}$. For example, the function $\psi(r)= 2re^{\frac{H_0^2 r^2}{4}}$ will do.  We then have 
\begin{eqnarray*}
  \lefteqn{\itl \axgrad{u_k, \nabla u_k} \cdot \exptk \psi'(z_k) \nabla z_k \dx}\\
	&=& \itl \Big[{\mathcal{H}}_k(x, u_k, \nabla u_k)  - \de \mathcal{A}(x,\nabla u_k) \cdot \nabla T_j(u_k) {\rm sign}(u_k)\Big]  \exptk \psi(z_k) \dx  \\
&& + \ \langle\sigma, \exptk \psi(z_k) \rangle.  
\end{eqnarray*}

Note that the term on the left-hand side  in the above equality can be written as
\begin{eqnarray*}
  \lefteqn{\itl \axgrad{u_k, \nabla u_k} \cdot  (\nabla T_s(u_k) - \nabla T_s(u)) \exptk \psi'(z_k)\dx}  \\
& =& \itl[\{\abs{u_k}\leq s\}] (\axgrad{T_s(u_k), \nabla T_s(u_k)} - \axgrad{ T_s(u_k), \nabla T_s (u)} )\cdot \\
&& \qquad \qquad \qquad \cdot\,  (\nabla T_s(u_k) - \nabla T_s(u)) \exptk \psi'(z_k)\dx \\
& &\quad + \itl[\{\abs{u_k}\leq s\}]\axgrad{ T_s(u_k), \nabla T_s (u)} \cdot  (\nabla T_s(u_k) - \nabla T_s(u)) \exptk \psi'(z_k)\dx \\
& &\quad + \itl[\{\abs{u_k}>s\}] \axgrad{ u_k, \nabla u_k} \cdot  (- \nabla T_s(u)) \exptk \psi'(z_k)\dx.
\end{eqnarray*}

Thus combining the last two equalities we obtain
\begin{equation}\label{First5}
I_1=-I_2-I_3 + I_4 +I_5,
\end{equation}
\textcolor{black}{where we have defined}
\begin{eqnarray*}
I_1&=&\itl[\{\abs{u_k}\leq s\}] (\axgrad{ T_s(u_k), \nabla T_s(u_k)} - \axgrad{ T_s(u_k), \nabla T_s (u)} )\cdot\\
&& \qquad \qquad \qquad \cdot\, (\nabla T_s(u_k) - \nabla T_s(u)) \exptk \psi'(z_k)\dx,
\end{eqnarray*}
$$I_2=\itl[\{\abs{u_k}\leq s\}]\axgrad{ T_s(u_k), \nabla T_s (u)} \cdot  (\nabla T_s(u_k) - \nabla T_s(u)) \exptk \psi'(z_k)\dx,$$
$$I_3=\itl[\{\abs{u_k}>s\}] \axgrad{ u_k, \nabla u_k} \cdot  (- \nabla T_s(u)) \exptk \psi'(z_k)\dx,$$
$$I_4=\itl \Big[{\mathcal{H}}_k(x, u_k, \nabla u_k)  - \de \mathcal{A}(x, u_k, \nabla u_k) \cdot \nabla T_j(u_k) {\rm sign}(u_k)\Big]  \exptk \psi(z_k) \dx,$$
and 
$$I_5=\langle\sigma, \exptk \psi(z_k) \rangle.$$

We now write $I_4$ as 
\begin{equation}\label{Secondprime}
I_4= I_4' + I_4'',
\end{equation}
where 
$$I_4':= \itl[\{\abs{u_k}> s\}] H_{k,j}(x) \, \exptk \psi(z_k) \dx,$$
$$I_4'':=\itl[\{\abs{u_k}\leq s\}] H_{k,j}(x) \,  \exptk \psi(z_k) \dx,$$
with $$H_{k,j}(x):= {\mathcal{H}}_k(x, u_k, \nabla u_k)  - \de \mathcal{A}(x, u_k,\nabla u_k) \cdot \nabla T_j(u_k) {\rm sign}(u_k).$$

Note that   $\abs{\nabla T_j(u_k)}\leq  \abs{\nabla u_k}$  and hence using the growth conditions in \eqref{growth-p} and \eqref{Bcond}  we get
\begin{eqnarray*}
|I_4''| &\leq&     \itl[\{\abs{u_k}\leq s\}] \Big(b_0 \abs{\nabla u_k}^p + b_1 |u_k|^m  + \de a_0 \abs{\nabla u_k}^{p} +\delta a_1 |u_k|^{p-1} |\nabla u_k|\Big)\times\\
&& \qquad \qquad \qquad \times   \exptk\  \abs{\psi(z_k)}\    \dx \\
&\leq&  \itl[\{\abs{u_k}\leq s\}] (b_0 + \de (a_0+a_1)  )\abs{\nabla u_k}^p   \exptk\  \abs{\psi(z_k)}\    \dx \\
&& +\  \itl[\{\abs{u_k}\leq s\}] \Big(b_1 |u_k|^m +c(p)\delta a_1 |u_k|^{p} \Big)  \exptk\  \abs{\psi(z_k)}\    \dx\\
&\leq& \frac{b_0 + \de(a_0+a_1)}{\alpha_0}  \itl[\{\abs{u_k}\leq s\}] \mathcal{A}(x, T_s(u_k), \nabla T_s(u_k)) \cdot \nabla T_s(u_k)   \exptk\  \abs{\psi(z_k)}\    \dx \\
&& +\  \itl[\{\abs{u_k}\leq s\}] \Big(b_1 |u_k|^m + c(p)\delta a_1 |u_k|^{p} \Big)  \exptk\  \abs{\psi(z_k)}\    \dx,
\end{eqnarray*}
where we used Young's inequality in the second inequality and  the coercivity condition \eqref{coercivity} in the last inequality. \textcolor{black}{Thus, recalling that 
$H_0=\frac{b_0 + \de(a_0+a_1)}{\alpha_0},$} we find
\begin{eqnarray*}
|I_4''|  
& \leq & H_0 \itl[\{\abs{u_k}\leq s\}]  \left[ \axgrad{ T_s(u_k), \nabla T_s(u_k)} -\axgrad{ T_s(u_k), \nabla T_s(u)} \right] \cdot \\
&& \qquad \qquad \qquad \cdot \left[ \nabla T_s(u_k) - \nabla T_s(u) \right]    \exptk\  \abs{\psi(z_k)}  dx \\
&& + \  H_0\itl[\{\abs{u_k}\leq s\}]\axgrad{ T_s(u_k), \nabla T_s(u)} \cdot \left[ \nabla T_s(u_k) - \nabla T_s(u) \right] \exptk\  \abs{\psi(z_k)}    dx \\
&&  + \   H_0\itl[\{\abs{u_k}\leq s\}]\axgrad{ T_s(u_k), \nabla T_s(u_k)} \cdot \nabla T_s(u)  \exptk\  \abs{\psi(z_k)}   dx \\
&& +\   \itl[\{\abs{u_k}\leq s\}] \Big(b_1 |u_k|^m + c(p)\delta a_1 |u_k|^{p-1} |\nabla T_s(u_k)|\Big)  \exptk\  \abs{\psi(z_k)}   dx.
\end{eqnarray*}
 
Using this bound,  equalities \eqref{First5}-\eqref{Secondprime}, and the inequality  in \eqref{psicond}, we now obtain
\begin{equation}\label{I8}
I_1'\leq -I_2-I_3 + I_4' +I_5 +I_6 + I_7 +I_8,
\end{equation}
where 
$$I_1'=\itl[\{\abs{u_k}\leq s\}] (\axgrad{ T_s(u_k), \nabla T_s(u_k)} - \axgrad{T_s (u), \nabla T_s (u)} )\cdot  (\nabla T_s(u_k) - \nabla T_s(u)) \ dx,$$ 
$$I_6=H_0 \itl[\{\abs{u_k}\leq s\}]\axgrad{T_s(u_k), \nabla T_s(u)} \cdot \left[ \nabla T_s(u_k) - \nabla T_s(u) \right] \exptk\  \abs{\psi(z_k)}\    \dx,$$
$$I_7=H_0 \itl[\{\abs{u_k}\leq s\}]\axgrad{T_s(u_k), \nabla T_s(u_k)} \cdot \nabla T_s(u)  \exptk\  \abs{\psi(z_k)}\    \dx,$$
and
$$I_8=\itl[\{\abs{u_k}\leq s\}] \Big(b_1 |u_k|^m + c(p)\delta a_1 |u_k|^{p-1} |\nabla T_s(u_k)|\Big)  \exptk\  \abs{\psi(z_k)}\    \dx.$$

We shall next treat each term on the right-hand side of \eqref{I8}.

\noindent {\bf The term $I_2$:}  We know that $u_k \xrightarrow{k} u$ a.e., from which we see that $z_k \xrightarrow{k} 0$ a.e. and hence
 $$\axgrad{T_s (u_k), \nabla T_s (u)} \exptk \psi'(z_k) \xrightarrow{k} \axgrad{T_s (u), \nabla T_s (u)} \expt[\abs{T_j(u)}] \psi'(0) \quad {\rm a.e.}$$

Thus using the pointwise estimate, which follows from \eqref{growth-p},
 $$|\axgrad{T_s(u_k), \nabla T_s(u)} e^{\de |T_j(u_k)|} \psi'(z_k)| \leq  e^{\de j} \max_{r \in [-2s,2s]}\abs{\psi'(r)} \Big[ a_0|\nabla T_s(u)|^{p-1} + a_1 s^{p-1}\Big]$$
 and  the fact that \textcolor{black}{$|\nabla T_s(u)|^{p-1} \in L^{\frac{p}{p-1}} (\Om)$}, it follows from Lebesgue's Dominated Convergence Theorem   that 
$$\axgrad{T_s (u_k), \nabla T_s (u)} \exptk \psi'(z_k) \xrightarrow{k} \axgrad{T_s (u), \nabla T_s (u)} \expt[\abs{T_j(u)}] \psi'(0)$$  
strongly in \textcolor{black}{$L^{\frac{p}{p-1}}(\Om,\RR^n)$.  }

Since $ \norm {T_s(u_k)}_{\wpo}$ is uniformly bounded in $k$ and $T_s(u_k) \xrightarrow{k} T_s(u)$ a.e. we get that $\nabla T_s(u_k) \xrightharpoonup{k} \nabla T_s(u)$ weakly in \textcolor{black}{$L^p(\Om,\RR^n)$. }
Also, since 
\begin{equation}\label{chiconv}
\chi_{\{\abs{u_k} \leq s\}} \xrightarrow{k} \chi_{\{\abs{u} \leq s\}}  {\rm~ a.e. ~in~} \Om\setminus\{\abs{u} = s\} {\rm ~while~} |\nabla T_s(u)| = 0 {\rm~ a.e.~ on~} \{\abs{u} = s\},
\end{equation}
 we  have  from Lebesgue's Dominated Convergence Theorem that 
\begin{equation*}
\nabla T_s(u)\chi_{\{\abs{u_k} \leq s\}} \xrightarrow{k} \nabla T_s(u)\chi_{\{\abs{u} \leq s\}}=\nabla T_s(u) \trm{strongly in} \textcolor{black}{L^p(\Om,\RR^n). }
\end{equation*}
Thus with the observation $\chi_{\{\abs{u_k} \leq s\}} (\nabla T_s(u_k) - \nabla T_s(u)) = \nabla T_s(u_k) - \nabla T_s(u)\chi_{\{\abs{u_k} \leq s\}}$,
we see that 
\begin{equation}\label{weaknablaz}
\chi_{\{\abs{u_k} \leq s\}} (\nabla T_s(u_k) - \nabla T_s(u)) \xrightharpoonup{k} 0 \trm{weakly in} \textcolor{black}{L^p(\Om,\RR^n).}
\end{equation}

The above calculations  imply  that $\lim_{k\rightarrow\infty} I_2 =0$.

\noindent {\bf The term $I_3$:} By \eqref{growth-p},  $|\axgrad{ u_k, \nabla u_k}|$ is uniformly bounded in $L^{\frac{p}{p-1}}(\Om)$. On the other hand, again by \eqref{chiconv} and Lebesgue's Dominated Convergence Theorem  we have $$|\chi_{\{\abs{u_k}>s\}}  (-\nabla T_s(u)) \exptk \psi'(z_k)| \xrightarrow{k} 0 \trm{strongly in} L^p(\Om).$$

Thus  we see that $\lim_{k\rightarrow\infty}I_3=0$.

\noindent {\bf The term $I_4'$:} 
We have the inequalities $\axgrad{u_k,  \nabla u_k}\cdot  \nabla T_j(u_k) \geq \alpha_0 \abs{\nabla u_k}^p \chi_{\{\abs{u_k}\leq j\}}$ and  $\chi_{\{\abs{u_k}>s\}} {\rm sign}(u_k) \psi(z_k) \geq 0$. 
Thus using  the second inequality in \eqref{Bcond} we see that
%
\begin{eqnarray*}
 I_4' &=& \itl[\{\abs{u_k}>s\}] \left[ {\rm sign}(u_k) {\mathcal{H}}_k (x,u_k, \nabla u_k) -\de \axgrad{u_k,  \nabla u_k} \cdot \nabla T_j(u_k) \right] \times \\
&& \qquad \qquad \qquad \qquad \times \ {\rm sign}(u_k) 
\exptk \psi(z_k) \dx \\
&\leq& \itl[\{\abs{u_k}>s\}] \left[ \gamma_0\alpha_0 \abs{\nabla u_k}^p -\de \alpha_0\abs{\nabla u_k}^{p} \chi_{\{\abs{u_k}\leq j\}} \right] {\rm sign}(u_k) \exptk \psi(z_k) \dx \\
 & \leq & \itl[\{\abs{u_k}>j\}] \gamma_0\alpha_0 \abs{\nabla u_k}^p  {\rm sign}(u_k) \exptk \psi(z_k) \dx,
\end{eqnarray*}
where we used that $\delta\geq \gamma_0$ {and $j\geq s$} in the last inequality. At this point, using \eqref{taildelta} with $j$ in place of $s$, we get
\begin{eqnarray*}
 I_4'  & \leq & \gamma_0\alpha_0 \max_{r\in[-2s,2s]}|\psi(r)| \  e^{\delta j}\itl[\{\abs{u_k}>j\}]  \abs{\nabla u_k}^p   \dx\\
& \leq & C(\delta) \gamma_0\alpha_0 \max_{r\in[-2s,2s]}|\psi(r)| \  e^{\delta j}\  e^{-\frac{\delta p}{p-1} j}.
\end{eqnarray*}

This yields  that {$\limsup_{j\rightarrow\infty} \sup_{k>0} I_4'=0.$}

\noindent {\bf The term $I_5$:} Since $f\in (W_0^{1,p}(\Om))^*$, there is a vector field \textcolor{black}{$F_1\in L^{\frac{p}{p-1}}(\Om,\RR^n)$} such that ${\rm div}\, F_1 =f$ in $\mathcal{D}'(\Om)$. Thus $\sigma= {\rm div}\, (F+F_1)$ which yields
\begin{eqnarray}\label{I5term}
I_5&=&\de \itl (F+F_1) \cdot \exptk \psi(z_k) \nabla T_j(u_k) {\rm sign}(u_k)\ dx\\
&& +\itl (F+F_1)\cdot \exptk \psi'(z_k) \nabla z_k \ dx.\nonumber
\end{eqnarray}

As $\psi(0)=0$ we have $ (F+F_1)  \exptk \psi(z_k)  \xrightarrow{k} (0, \dots,0)$ a.e. in  $\Om$. Thus by Lebesgue's Dominated Convergence Theorem we find  
 $$ (F+F_1)  \exptk \psi(z_k)   \xrightarrow{k} (0,\dots, 0) \trm{strongly in} \textcolor{black}{L^{\frac{p}{p-1}}(\Om,\RR^n)}.$$
Since $\nabla T_j(u_k) {\rm sign}(u_k)$ is uniformly bounded in $L^p(\Om,\RR^n)$, we then conclude that 
\begin{equation}\label{de1}
\de \itl (F+F_1) \cdot \exptk \psi(z_k) \nabla T_j(u_k) {\rm sign}(u_k)\ dx \xrightarrow{k} 0.
\end{equation}

We now write
\begin{equation}\label{splitF}
\itl (F+F_1)\cdot \exptk \psi'(z_k) \nabla z_k \ dx=  {\color{black}R_1 + R_2}, 
\end{equation}
 {\color{black}where 
 \begin{gather*}
 R_1:= \itl[\{\abs{u_k}\leq s\}](F+F_1)\cdot \exptk \psi'(z_k) \nabla z_k \ dx \\R_2:= \itl[\{\abs{u_k}> s\}](F+F_1)\cdot \exptk \psi'(z_k) \nabla z_k \ dx.
 \end{gather*}}

Again by Lebesgue's Dominated Convergence Theorem we have
$$ (F+F_1) \exptk \psi'(z_k) \xrightarrow{k} {(F+F_1)} \expt[\abs{T_j(u)}] \psi'(0) \trm{strongly in} L^{\frac{p}{p-1}}(\Om,\RR^n).$$ 

Thus using \eqref{weaknablaz} (recall that $\nabla z_k= \nabla T_s(u_k) - \nabla T_s(u)$) we obtain that  
{\color{black}$$R_1\xrightarrow{k} 0.$$}

On the other hand, from the definition of $z_k$ we have 
{\color{black}$$R_2=\itl {(F+F_1)}\cdot \exptk \psi'(z_k) (-\nabla T_s(u)) \chi_{\{\abs{u_k}> s\}}\ dx.$$}

Then by \eqref{chiconv}, H\"older's inequality,  and Lebesgue's Dominated Convergence Theorem, it follows that 
{\color{black}$$R_2 \xrightarrow{k} 0.$$}

Now recalling \eqref{splitF} we get 
\begin{equation}\label{de2}
\itl {(F+F_1)}\cdot \exptk \psi'(z_k) \nabla z_k \ dx \xrightarrow{k} 0.
\end{equation}

Hence using \eqref{de1} and \eqref{de2} in \eqref{I5term} we conclude that 
$\lim_{k\rightarrow\infty} I_5=0$.

\noindent {\bf The terms $I_6$, $I_7$, and $I_8$:}
Since $\psi(0)=0$, by Lebesgue's Dominated Convergence Theorem we find
$$\chi_{\{\abs{u_k}\leq s\}} \axgrad{T_s(u_k), \nabla T_s(u)} \exptk   \abs{\psi(z_k)} \xrightarrow{k} 0 \trm{strongly in } \textcolor{black}{L^{\frac{p}{p-1}}(\Om,\RR^n)}$$
and 
$$\chi_{\{\abs{u_k}\leq s\}} \nabla T_s(u) \exptk   \abs{\psi(z_k)} \xrightarrow{k} 0 \trm{strongly in } \textcolor{black}{L^{p}(\Om,\RR^n).}$$

On the other hand, $\nabla T_s(u_k) - \nabla T_s(u)$ and $\axgrad{T_s(u_k), \nabla T_s(u_k)}$ are uniformly bounded in \textcolor{black}{$L^{p}(\Om,\RR^n)$ and in $L^{\frac{p}{p-1}}(\Om,\RR^n)$}, respectively. Thus we obtain that 
$$\lim_{k\rightarrow\infty} I_6=\lim_{k\rightarrow\infty} I_7=0.$$ 

As for the term $I_8$, we estimate
$$I_8\leq \itl[\{\abs{u_k}\leq s\}]  (b_1 |s|^m + c(p)\delta a_1 s^{p-1})   e^{\delta s}  \abs{\psi(z_k)} dx, $$
which also converges to zero, as $k\textcolor{black}{\nearrow}\infty$, by Lebesgue's Dominated Convergence Theorem.

We have shown that $\lim_{k\rightarrow\infty} (-I_2-I_3 +I_5 +I_6 + I_7 +I_8)=0$ and {$\limsup_{j\rightarrow\infty} \sup_{k>0}I_4'=0$}. 
For each fixed $s>0$, we now let 
$$D_k=(\axgrad{T_s(u_k), \nabla T_s(u_k)} - \axgrad{T_s(u_k), \nabla T_s (u)} )\cdot  (\nabla T_s(u_k) - \nabla T_s(u)).$$
{As $D_k\geq 0$ (by \eqref{monotone-strict}),  in view of \eqref{I8} we find that} 
\begin{equation}\label{Brow1}
\itl[\{\abs{u_k}\leq s\}] D_k \ dx \xrightarrow{k} 0.
\end{equation}

On the other hand, by \eqref{chiconv}, 
\begin{eqnarray*}
\chi_{\{\abs{u_k}> s\}} D_k
&=&\chi_{\{\abs{u_k}> s\}}  [\axgrad{ T_s(u_k), 0} - \axgrad{ T_s(u_k), \nabla T_s (u)}]\cdot  (-\nabla T_s(u))\\
&\rightarrow& 0 \quad \text{a.e. as } k\textcolor{black}{\nearrow}\infty.
\end{eqnarray*}

It then follows from Lebesgue's Dominated Convergence Theorem that
\begin{equation}\label{Brow2}
\itl[\{\abs{u_k} > s\}] D_k \  dx \xrightarrow{k} 0. 
\end{equation}

Combining  \eqref{Brow1}-\eqref{Brow2} we obtain 
\begin{equation*}
\itl[\Om] D_k \  dx \xrightarrow{k} 0.
\end{equation*}

At this point we  use the conditions  \eqref{monotone-strict}-\eqref{growth-p} and a result of  F. E. Browder (see \cite{Bro} or \cite[ Lemma 5]{BMP}) to \textcolor{black}{complete} the proof of \eqref{head}. 
\end{proof}

\section{Proof of Theorems \ref{weakzero} and \ref{Schro-type}} \label{main-proofs}

We are now ready to prove Theorem \ref{weakzero}.
\begin{proof}[Proof of  Theorem \ref{weakzero}] (i) Suppose that \eqref{basic_pde} has a solution in $u\in W^{1,p}_0(\Om)$ such that 
\eqref{nablau-cond} holds for some $A>0$. Then letting $F=|\nabla u|^{p-2}\nabla u$, we immediately have the desired representation for $\sigma$.
 
\noindent (ii) Suppose that  $\sigma={\rm div}\, F + f$ where \textcolor{black}{$F\in L^{\frac{p}{p-1}}(\Om,\RR^n)$} and $f$ is a locally finite signed measure in $\Om$ with $|f|\in (W^{1,p}_0(\Om))^*$ such that  \eqref{datasmallness} holds  for some   $\lambda\in (0, (p-1)^{p-1})$. Applying Theorem \ref{MainExistence} we obtain a solution to \eqref{basic_pde} that satisfies 
all of the properties stated in Theorem \ref{weakzero}(ii) except the Poincar\'e-Sobolev inequality \eqref{nablau-cond}. To verify it, we use $|\varphi|^p$, $\varphi\in C_c^\infty(\Om)$, as a test function in \eqref{basic_pde} to get
$$\int_{\Om} |\varphi|^p|\nabla u|^p dx = p \int_\Om |\nabla u|^{p-2}\nabla u \cdot  \nabla |\varphi| |\varphi|^{p-1} dx + \langle\sigma, |\varphi|^p \rangle.$$

Thus by H\"older's inequality and condition  \eqref{datasmallness} we find
$$\int_{\Om} |\varphi|^p|\nabla u|^p dx \leq  p \left(\int_\Om |\nabla u|^{p} |\varphi|^p dx\right)^{\frac{p-1}{p}}  \left(\int_\Om |\nabla \varphi|^p dx\right)^{\frac{1}{p}} + (p-1)^{p-1} \int_\Om |\nabla \varphi|^p dx.$$

At this point applying Young's inequality we obtain  the Poincar\'e-Sobolev inequality \eqref{nablau-cond} with some $A=A(p)>0$.
\end{proof}

Finally, we prove Theorem \ref{Schro-type}.

\begin{proof}[Proof of Theorem \ref{Schro-type}]  By Theorem \ref{weakzero}(ii) we can find a solution $u\in W^{1,p}_0(\Om) $ to \eqref{basic_pde} such that both 
$e^u-1$ and $e^{\frac{u}{p-1}}-1\in W^{1,p}_0(\Om)$. Thus if we define $v=e^{\frac{u}{p-1}}$ then it holds that $v-1\in W^{1,p}_0(\Om)$ and $v^{p-1}=e^{u}\in W^{1,p}(\Om)$.
We will show that $v$ is indeed a solution of  \eqref{basic_pde-schr}. 

We first observe that the function $e^u |\nabla u|^p$ belongs to $L^1(\Om)$. Indeed,
\begin{eqnarray*}
\int_\Om e^u |\nabla u|^pdx &=&\int_{\{u\geq 0\}\cap\Om}e^u |\nabla u|^pdx + \int_{\{u< 0\}\cap\Om} e^u |\nabla u|^pdx\\
&\leq& \int_{\{u\geq 0\}\cap\Om} e^{p u} |\nabla u|^pdx + \int_{\{u< 0\}\cap\Om} |\nabla u|^pdx\\
&\leq& \int_{\Om}  |\nabla (e^{u})|^pdx + \int_{\Om} |\nabla u|^pdx <+\infty.
\end{eqnarray*}

Let $\varphi\in C_c^\infty(\Om)$. Using $\phi_j:=\varphi \min\{ e^u, j\}$, $j>0$, as a test function for \eqref{basic_pde} we have
\begin{equation}\label{j-test}
\int_{\Om} |\nabla u|^{p-2}\nabla u \cdot \nabla\phi_jdx=\int_{\Om} |\nabla u|^p \phi_j dx + \langle\sigma, \phi_j \rangle.
\end{equation}

We now send $j\textcolor{black}{\nearrow}\infty$  in \eqref{j-test} to obtain   
$$\int_{\Om} |\nabla u|^{p-2}\nabla u \cdot \nabla(\varphi e^u)dx=\int_{\Om} |\nabla u|^p \varphi e^u dx + \langle\sigma, \varphi e^u\rangle.$$
Here we  use $e^u |\nabla u|^p \in L^1(\Om)$ and Lebesgue's Dominated Convergence Theorem. We note that actually by Lemma \ref{dis-mea} we can immediately use $\varphi e^u$ as a test function.  Thus after expanding and simplifying we get
$$\int_{\Om} [|\nabla u|^{p-2}\nabla u\cdot \nabla\varphi ] e^udx= \langle\sigma, \varphi e^u\rangle=\langle\sigma, \varphi v^{p-1}\rangle.$$

Note that  $\nabla v=(p-1)^{-1} e^{\frac{u}{p-1}}\nabla u$ and thus $\nabla u= (p-1) e^{-\frac{u}{p-1}}\nabla v$.
This yields that  
$$(|\nabla u|^{p-2}\nabla u) e^{u}= (p-1)^{p-1} |\nabla v|^{p-2} \nabla v,$$
and hence
$$\int_{\Om} |\nabla v|^{p-2}\nabla v\cdot \nabla\varphi dx= (p-1)^{1-p} \langle\sigma, \varphi v^{p-1}\rangle$$
for all $\varphi\in C_c^\infty(\Om)$. This shows that $v$ is a solution of \eqref{basic_pde-schr} as claimed.

Finally, inequality  \eqref{WNforv}  follows from \eqref{nablau-cond} and the equality $|\frac{\nabla v}{v}|^p=(p-1)^{-p}|\nabla u|^p$.
\end{proof}

\begin{remark}\label{mea-coeff} {The above argument also works for the more general equation 
\begin{equation*}
-{\rm div}\, \mathcal{A}(x, \nabla v) =  (p-1)^{1-p}\, \sigma\, v^{p-1}  \text{ in } \Omega, \qquad v \geq 0  \text{ in } \Omega, \qquad v = 1   \text{ on } \partial \Omega, 
\end{equation*} 
where $\mathcal{A}(x,\xi)$ satisfies \eqref{monotone-strict}-\eqref{growth-p} with $0<\alpha_0\leq a_0$ and the homogeneity condition
$$\mathcal{A}(x, t\xi)= t^{p-1} \mathcal{A}(x, \xi) \qquad  \text{for all  } t>0.$$}
{In this case $v=e^{\frac{u}{p-1}}$, where $u\in W^{1,p}_0(\Om)$ solves the equation 
$$-{\rm div}\, \mathcal{A}(x, \nabla u)= \mathcal{A}(x, \nabla u)\cdot \nabla u + \sigma.$$
By Theorem \ref{MainExistence}, to guarantee that both $e^{u}-1$ and $e^{\frac{u}{p-1}} -1\in W^{1,p}_0(\Om)$, we also need to assume 
$$\lambda\in \Big(0,\  a_0^{1-p} \alpha_0^{p}\, (p-1)^{ p-1}\Big)  \quad \text{if} \quad \frac{a_0}{\alpha_0}\geq p-1$$
and 
$$\lambda\in (0,\  \alpha_0 p-a_0)  \quad \text{if} \quad \frac{a_0}{\alpha_0} < p-1.$$
However, note that no regularity assumption in the $x$-variable of $\mathcal{A}(x,\xi)$ is needed here.}
\end{remark}


\begin{thebibliography}{xx}


\bibitem{ADP}  B. Abdellaoui, A. Dall'Aglio, and I. Peral, {\it Some remarks on elliptic problems with critical growth in the gradient},
J. Differential Equations {\bf 222} (2006) 21--62.


\bibitem{AH} D. R. Adams  and   L. I. Hedberg, {\it Function Spaces and Potential Theory}, Springer-Verlag, Berlin, 1996.



%
%
%

\bibitem{Anc} A. Ancona, {\it On strong barriers and an inequality of Hardy for domains in $\RR^n$},
J.   London   Math.  Soc. {\bf 34} (1986), 274--290.
 
\bibitem{BBM} A. Bensoussan, L. Boccardo, and F. Murat, {\it On a nonlinear partial differential equation having natural growth terms and unbounded solution},
Ann. Inst. H. Poincar\'e Anal. Non Lin\'eaire, {\bf 5} (1988),  347-–364.


\bibitem{BGO} L. Boccardo, T. Gallou\"et, and L. Orsina, {\it Existence and uniqueness of entropy solutions
for nonlinear elliptic equations with measure data}, Ann. Inst. H. Poincar\'e Anal. Non
Lin\'eaire {\bf 13} (1996) 539--551.

\bibitem{BM} L. Boccardo and F. Murat,  {\it Almost everywhere convergence of the gradients of solutions to elliptic and parabolic equations}, Nonlinear Anal. {\bf 19} (1992), 581--597.

 \bibitem{BMP} L.  Boccardo, F. Murat, and  J.-P.  Puel, {\it Existence of bounded solutions for nonlinear elliptic unilateral problems}, Ann. Mat. Pura 
Appl. (4) {\bf 152} (1988), 183--196. 

\bibitem{Bre} H. Br\'ezis, {\it \'Equations et in\'equations non-lin\'eaires dans les espaces vectoriel en dualit\'e}, Ann. Inst. Fourier {\bf 18} (1968), 115--176.

\bibitem{BB} H. Br\'ezis and F.  E.  Browder,  {\it Some properties of higher order Sobolev spaces}, J. Math. Pures Appl.  {\bf 61} (1982),  245--259. 


\bibitem{Bro} F. E. Browder, {\it Existence theorems for nonlinear partial differential equations}, in S.-S. Chern, S. Smale (Eds), Proc. Sympos. Pure Math., Vol. XVI,  pp. 1--60,  Amer. Math. Soc., Providence, R.I. 1970.

%

%
%
%
%
%
%
%
%
%

\bibitem{DPT} D. M. Duc, N. C. Phuc, and T. V. Nguyen, {\it Weighted Sobolev's inequalities for bounded domains and singular elliptic equations}, Indiana Univ. Math. J. {\bf 56} (2007), 615--642.



\bibitem{ChWW} S.-Y.~A. Chang, J.~M. Wilson, and T.~H. Wolff,
{\it Some weighted norm inequalities concerning the Schr\"odinger operators},
Comment. Math. Helv. {\bf 60} (1985), 217--246.

\bibitem{Fef} C. Fefferman, {\it The uncertainty principle}, Bull. Amer. Math. Soc. {\bf 9} (1983), 129--206.


\bibitem{FM1}  V. Ferone and F. Murat, {\it Quasilinear problems having natural growth in the gradient: an existence result when the
source term is small}, in: \'Equations aux d\'eriv\'ees partielles et applications, Articles d\'edi\'es \`a Jacques-Louis Lions, Gauthier-Villars, Paris, 1998, pp. 497--515.

\bibitem{FM2} V. Ferone and F.  Murat, {\it Nonlinear problems having natural growth in the gradient: an existence result when the source terms are small}, Nonlinear Anal., {\bf 42} (2000), 1309--1326. 

\bibitem{FM3} V. Ferone and F. Murat, {\it Nonlinear elliptic equations with natural growth in the gradient and source terms in Lorentz Spaces}, J. Differential Equations {\bf 256} (2014),  577--608.



\bibitem{FV} M.  Frazier and I. E. Verbitsky, {\it Positive solutions to Schr\"odinger's equation and the exponential integrability of the balayage}, Preprint 2015, 	arXiv:1509.09005.

\bibitem{FS} M. Fukushima, K. Sato, and S. Taniguchi, {\it On the closable part of pre-Dirichlet forms and the
fine support of the underlying measures}, Osaka J. Math. {\bf 28} (1991) 517--535.

%
%



\bibitem{HBV} H. A. Hamid and M. F.  Bidaut-Veron, {\it On the connection between two quasilinear elliptic problems with source terms of order $0$ or $1$}. Commun. Contemp. Math. {\bf 12} (2010),  727--788.

\bibitem{HMV} K. Hansson,  V. G. Maz'ya, and I. E. Verbitsky, {\it Criteria of solvability for multidimensional Riccati equations}, Ark. Mat. {\bf 37} (1999),  87-–120.

%
%
%
%


\bibitem{JMV1} B. Jaye, V. G. Maz'ya, and I. E. Verbitsky, {\it Existence and regularity of positive solutions of elliptic equations of Schrödinger type}, J. Anal. Math. {\bf 118} (2012),  577--621.

\bibitem{JMV2} B. Jaye, V. G. Maz'ya, and I. E. Verbitsky, {\it Quasilinear elliptic equations and weighted Sobolev-Poincar\'e inequalities with distributional weights}, Adv. Math. 
{\bf 232} (2013), 513--542. 


\bibitem{KPZ} M. Kardar, G. Parisi, and Y.-C. Zhang, {\it Dynamic scaling of growing interfaces}, Phys. Rev. Lett. {\bf 56} (1986) 889--892.


\bibitem{KS} J. Krug and H. Spohn, {\it Universality classes for deterministic surface growth}, Phys. Rev. A (3) {\bf 38} (1988) 4271--4283.

%


 
%


\bibitem{Lew} J. L. Lewis, {\it Uniformly fat sets}, Trans. Amer. Math. Soc. {\bf 308} (1988),  177--196.


 
%

\bibitem{Li} J.-L. Lions, {\it Quelques m\'ethodes de r\'esolution des probl\`emes aux limites non lin\'eaires}, Dunod; Gauthier-Villars, Paris 1969. xx+554 pp.
 

\bibitem{MZ}  J. Mal\'y and W. P.  Ziemer, {\it Fine regularity of solutions of elliptic partial differential equations}, Mathematical Surveys and Monographs, {\bf 51}. American Mathematical Society, Providence, RI, 1997. xiv+291 pp.

\bibitem{Maz} V. G. Maz'ya, {\it Sobolev Spaces with Applications to Elliptic Partial Differential Equations}, second, revised and augmented ed., in: Grundlehren der math. Wissenschaften, vol. 342, Springer, Heidelberg, 2011.



%
%

\bibitem{MP} T. Mengesha and N. C. Phuc {\it Quasilinear Ricatti type equations with distributional data in Morrey space framework}, J. Differential Equations {\bf 260} (2016),  5421--5449.


%
%
\bibitem{Mik} P. Mikkonen, {\it On the Wolff potential and quasilinear elliptic equations
 involving measures}, Ann. Acad. Sci. Fenn., Ser AI, Math. Dissert. {\bf 104} 1996, 1--71.
%
%
%

\bibitem{P} C. P\'erez, {\it Two weighted inequalities for potential and fractional type maximal operators}, Indiana Univ. Math. J.  {\bf 43}  (1994),  663--683.


%
%

%
%
%
%

\bibitem{SW} E. T. Sawyer and R. L. Wheeden, {\it Weighted inequalities 
for fractional integrals on Euclidean and homogeneous spaces}, Amer. J. Math. 
{\bf 114} (1992), 813--874.

%

%

\end{thebibliography}
\end{document}